\newtheorem{theorem}{Theorem}[section]
\newtheorem{lemma}[theorem]{Lemma}
\newtheorem{proposition}[theorem]{Proposition}
\theoremstyle{definition}
\newtheorem{definition}[theorem]{Definition}
\theoremstyle{remark}
\newtheorem{remark}{Remark}[section]
\numberwithin{equation}{section}
\begin{document}

\title[Local well-posedness for the KdV hierarchy at high regularity]{Local well-posedness for the KdV hierarchy at high regularity}
\author[C. E. Kenig and D. Pilod]{Carlos E. Kenig $^{\star}$ and Didier Pilod $^{\dagger}$}
\thanks{$^{\star}$ Partially supported by NSF Grants DMS-1463746 and DMS-1265249.}
\thanks{$^{\dagger}$ Partially supported by CNPq/Brazil, Grant 302632/2013-1 and 481715/2012-6.}
\subjclass[2010]{Primary 35Q53,  37K10,  35A01; Secondary 37K05, 35E15 }
\keywords{KdV hierarchy, local well-posedness, modified energy}

\keywords{}

\maketitle

\vspace{-0.5cm}

{\scriptsize \centerline{$^{\star}$Department of Mathematics, University of Chicago, }
             \centerline{ Chicago, IL, 60637 USA.}
             \centerline{email: cek@math.chicago.edu}

\vspace{0.1cm}
             \centerline{$^{\dagger}$  Instituto de Matem\'atica,
             Universidade Federal do Rio de Janeiro,}
              \centerline{Caixa Postal 68530, CEP: 21945-970, Rio
              de Janeiro, RJ, Brazil.}
              \centerline{email: didierpilod@gmail.com}}

\vspace{0.5cm}

\begin{abstract} 
We prove well-posedness in $L^2$-based Sobolev spaces $H^s$ at high regularity for a class of nonlinear higher-order dispersive equations generalizing the KdV hierarchy  both on the line and on the torus.
\end{abstract}

\section{Introduction}

The Korteweg-de Vries (KdV) equation 
\begin{equation} \label{KdV}
\partial_tu+\partial_x^3u=u\partial_xu \, 
\end{equation}
is well-known to be a completely integrable system. In particular Kruskal, Gardner and Miura \cite{GaKrMi} have constructed explicitly an infinite sequence of functionals $H_l(u)$, $l \in \mathbb N$, which are constant along the flow of \eqref{KdV}. Moreover, Gardner \cite{Ga} has shown that all the  functionals $H_l(u)$ are also constant along the flow of the generalized class of equations 
\begin{equation}  \label{KdVhierarchy}
\partial_tu=\partial_x G_l(u), \quad l \in \mathbb N ,
\end{equation}
introduced by Lax \cite{La} and called the \emph{KdV hierarchy}. Here $G_l(u)$ is defined by the induction formula 
\begin{displaymath}
\left\{\begin{array}{l} \partial_xG_{l+1}(u)=\left(\partial_x^3+\frac23u\partial_x+\frac13u_x \right) G_l(u), \quad l \ge 1, \\ G_0(u)=u. \end{array}\right.
\end{displaymath}
In particular, each equation in \eqref{KdVhierarchy} has a Hamiltonian structure associated to the Hamiltonian $H_l(u)$, defined by $\text{grad} \, H_l(u)= G_l(u)$. Observe that the equation in \eqref{KdVhierarchy} corresponding to $l=0$ is the linear wave equation, while the one corresponding to $l=1$ is the KdV equation. We will call the equation corresponding to $l=2$, 
\begin{equation} \label{5KdV}
\partial_tu-\partial_x^5u=\partial_x\Big(\frac53u\partial_x^2u+\frac56(\partial_xu)^2+\frac5{18}u^3 \Big)\, ,
\end{equation}
the fifth-order KdV equation, since it has $5$ derivatives in the linear part. We also refer to the nice introductions in \cite{Gr,Po,Sa} for more details and references on the KdV hierarchy.

In \cite{Sa}, Saut used the Hamiltonian structure to prove the existence of global weak solutions to \eqref{KdVhierarchy} in the energy space $H^l(\mathbb R)$, for each $l \in \mathbb N$, $l \ge 1$. Later on, Schwarz \cite{Sch} studied the class of equations \eqref{KdVhierarchy} in the periodic setting (\textit{i.e.} when the space variable $x \in \mathbb T$). He showed uniqueness of the solutions to \eqref{KdVhierarchy} associated to initial data in $H^n(\mathbb T)$, for $n \ge 3l+1$. His proof relies on the use of a modified energy, related to the Hamiltonian $H_l$, to control the difference of two solutions at the $H^l$-level. The proof seems to apply also very well in the continuous setting (\textit{i.e.} when $x \in \mathbb R$).

\medskip
Our purpose in this article  is to study local well-posedness of the initial value problem (IVP) associated to the whole KdV hierarchy \eqref{KdVhierarchy} in $L^2$-based Sobolev spaces $H^s$ at high regularity (for $s$ large enough) in both the continuous and periodic cases. The notion of well-posedness to be used here includes  
existence, uniqueness, persistence property (\textit{i.e.} the solution $u$ describes a continuous curve in $H^s$ whenever the initial datum $u_0=u(\cdot,0)$ belongs to $H^s$) and continuous dependence of the flow upon the initial data. In other words, we shall say that the IVP associated to \eqref{KdVhierarchy} is well-posed in $H^s$ if it induces a dynamical system on $H^s$ by generating a continuous local flow.

Actually, we will prove our result for the more general class of IVPs associated to the higher-order nonlinear dispersive equations 
\begin{equation} \label{hodisp}
\partial_tu+c_{2l+1}\partial_x^{2l+1}u+\sum_{k=2}^{l+1}N_{lk}(u)=0 \, ,
\end{equation}
where $x \in \mathbb R$ or $\mathbb T$, $t \in \mathbb R$, $u=u(x,t) \in \mathbb R$, $l \in \mathbb N,$ $l \ge 1$, $c_{2l+1} \neq 0$ and 
\begin{equation} \label{nonlinearity}
N_{lk}(u)=\sum_{|n|=2(l-k)+3}c_{l,k,n}\partial_x^{n_0}\prod_{i=1}^k\partial_x^{n_i}u\, ,
\end{equation}
with $|n|=\sum_{i=0}^kn_i$, $n_i \in \mathbb N,$ for $i=0,\cdots, k$ and $c_{l,k,n} \in \mathbb R$. 

This class of equations is similar\footnote{Actually, the only difference with the class introduced in \cite{Gr} is that we do not need to assume that the nonlinearity is in divergence form, \textit{i.e} here we assume $n_0 \ge 0$ instead of $n_0 \ge 1$ as in \cite{Gr}.} to the one introduced by Gr\"unrock in \cite{Gr} and generalizes the KdV hierarchy.  However, the equations in the class are not necessarily completely integrable or even hamiltonian. 

If we define the rank $r$ of a monomial $\partial_x^{n_0}\prod_{i=1}^k\partial_x^{n_i}u$ by $r=k+\frac{|n|}2$ where $k$ is the number of factors and $|n|=\sum_{i=0}^kn_i$ is the total number of differentiations, we observe that all the monomials appearing in  the nonlinearities \eqref{nonlinearity} of \eqref{hodisp} have the same rank $r=l+\frac32$.  For the quadratic terms corresponding to $k=2$, the total number of differentiations is then $2l-1$ and we need to deal with terms of the form $u\partial_x^{2l-1}u$. They are the most difficult terms to handle since they display a high-low frequency interaction in the nonlinearity as we will see below.

Moreover, the equations in \eqref{hodisp} are invariant under the scaling transformation $u_{\lambda}(x,t)=\lambda^2u(\lambda x,\lambda^{2l+1}t)$ for any $\lambda >0$ with initial data $u_{\lambda}(x,0)=\lambda^{2}u(\lambda x,0)$. Hence, $\|u_{\lambda}(\cdot,0)\|_{\dot{H}^s}=\lambda^{\frac32+s}\|u(\cdot,0)\|_{\dot{H}^s}$. Consequently, the critical Sobolev index for the class of equations \eqref{hodisp}  is $s_c = -\frac32$ just as for the KdV equation.

\medskip

In the case $l=1$, the IVP associated to the KdV equation has already been extensively studied. In particular it has been shown to be well-posed in the energy space  $H^1(\mathbb R)$ by Kenig, Ponce and Vega \cite{KPV3} in the continuous case (see also \cite{Bo2,KPV4,CKSTT,Guo,Kish} for further results at lower regularity) and in $L^2(\mathbb T)$ by Bourgain \cite{Bo2}  in the periodic case (see also \cite{KPV4,CKSTT} for further results at lower regularity). Moreover, since this result can be proved by using a fixed point argument in well-suited function spaces (related to the dispersive properties of the associated linear equation), the flow map of \eqref{KdV} is smooth. In other words, the KdV equation exhibits a \emph{semi-linear nature}.

This last property is however not true anymore for the other equations in the hierarchy (corresponding to $l \ge 2$) on the line. Indeed, it was proved by the second author \cite{Pi}, by adapting an argument of Molinet, Saut and Tzvetkov for the Benjamin-Ono equation \cite{MST1}, that the flow map associated to \eqref{hodisp}-\eqref{nonlinearity} fails to be $C^2$ in $L^2$-based Sobolev space $H^s(\mathbb R)$ for any $s 
\in \mathbb R$. This is due to the lack of control of the high-low frequency interaction in nonlinear quadratic terms of the form $\partial_x(u\partial_x^{2l-2}u)$ or $u\partial_x^{2l-1}u$. Note that strictly speaking
the proof in \cite{Pi} was given only in special cases of equations having only quadratic nonlinearities,  but, as was observed Gr\"unrock \cite{Gr}, since the cubic and higher terms in \eqref{nonlinearity} are well behaved, no cancellations occur, and the proof applies as well to the more general class of nonlinearities \eqref{nonlinearity} provided that the coefficient in front of 
$\partial_x(u\partial_x^{2l-2}u)$ or $u\partial_x^{2l-1}u$ is not $0$. In this sense, the equations following KdV in the KdV hierarchy \eqref{KdVhierarchy} exhibit a \emph{quasi-linear nature}. As a consequence, one cannot  solve the IVPs associated to \eqref{hodisp} by a Picard iterative method implemented
on the associated integral equations  for initial data in any Sobolev space $H^s(\mathbb R)$ with $s \in \mathbb R$.

However, the fixed point method may still be employed to prove well-posedness for \eqref{5KdV} in other function spaces. For example in \cite{KPV3,KPV4}, Kenig, Ponce and Vega dealt with the more general class of IVPs
\begin{equation} \label{KdV_h}
\left\{  \begin{array}[pos]{ll}
          \partial_tu+\partial_x^{2l+1}u=P(u,\partial_xu,\ldots,\partial_x^{2l}u),
          \quad x,\ t \in \mathbb R, \ l \in \mathbb N, l \ge 1 \\          u(0)=u_0, \\
     \end{array} \right.
\end{equation}
where
\begin{displaymath}
P: \mathbb R^{2l+1}\rightarrow \mathbb R \quad (\mbox{or} \ P:
\mathbb C^{2l+1}\rightarrow \mathbb C)
\end{displaymath}
is a polynomial having no constant or linear terms. They proved well-posedness in weighted Sobolev spaces of the type $H^k(\mathbb R) \cap H^m(\mathbb R; x^2dx)$ with $k, \ m \in \mathbb Z_+$, $k \ge k_0, \ m \ge m_0$ for some $k_0, \ m_0 \in \mathbb Z_+$ large enough. We also refer to \cite{Pi} for sharper results in the case of small initial data and when the nonlinearity in \eqref{KdV_h} is quadratic and to \cite{KeSt} for local well-posedness for a class of systems on the form \eqref{KdV_h}.

Recently, Gr\"unrock \cite{Gr} used a variant of the Fourier restriction norm method to prove well-posedness for the whole class of equations \eqref{hodisp}\footnote{There is also a sharp well-posedness result in the case $l=1$.} with $l \ge 2$ in $\widehat{H}^s_r(\mathbb R)$ for $1<r \le \frac{2l}{2l-1}$ and $s>l-\frac32-\frac1{2l}+\frac{2l-1}{2r'}$. Here, the space $\widehat{H}^s_r(\mathbb R)$ is defined by the norm
$ \|\varphi\|_{\widehat{H}^s_r}=\|\langle \xi\rangle^s \widehat{\varphi}\|_{L^{r'}} $with $\frac1r+\frac1{r'}=1$. We also refer to the work of Kato \cite{Ka} for a well-posedness result using another variant of the Fourier norm restriction method in the specific case $l=2$. He showed that the corresponding IVP is well-posed in $H^{s,a}(\mathbb R)$ for $s \ge \max\{-\frac14,-2a-2\}$ with $-\frac32 < a \le -\frac14$ and $(s,a) \neq (-\frac14,-\frac78)$, $H^{s,a}(\mathbb R)$ is a $H^s$-type space with a weight on low frequency and is defined by the norm
 $\|\varphi\|_{H^{s,a}}=\|\langle \xi\rangle^{s-a}|\xi|^a \widehat{\varphi}\|_{L^2}$.

\medskip
Nevertheless, the $L^2$-based Sobolev spaces $H^s$ remain the natural spaces to
study well-posedness for the the class of higher-order nonlinear dispersive equations \eqref{hodisp}, since when those equations possess a Hamiltonian structure, it is well-defined for functions in $H^l$ (as for example the equations in the KdV hierarchy \eqref{KdVhierarchy}). 

Now, we recall the results concerning the well-posedness in $H^s(\mathbb R)$ of \eqref{hodisp} in the case $l=2$. Due to the ill-posedness result in \cite{Pi}, we need to use an alternative method to the Picard iteration. The direct energy estimate for equation \eqref{hodisp} with $l=2$ (considering only the quadratic terms for simplicity) gives only
\begin{equation} \label{standardenergy}
\frac{d}{dt}\|\partial_x^ku(t)\|_{L^2}^2 \lesssim \|\partial_x^3u\|_{L^{\infty}_x}\|\partial_x^ku(t)\|_{L^2} ^2+\Big|\int_{\mathbb R}\partial_xu\partial_x^{k+1}u\partial_x^{k+1}udx\Big|.
\end{equation}
Observe that the last term on the right-hand side of \eqref{standardenergy} has still higher-order derivatives and cannot be treated by using only integration by parts. To overcome this difficulty, Ponce \cite{Po} used a recursive argument based on the dispersive smoothing effects associated to the linear part of \eqref{5KdV}, combined with a parabolic regularization method, to establish that the IVP \eqref{hodisp} is locally well-posed in $H^s(\mathbb R)$ for $s \ge 4$ in the case $l=2$. Later, Kwon \cite{Kw} improved Ponce's result by proving local well-posedness in $H^s(\mathbb R)$ for $s>5/2$. The main new idea was to modify the energy by adding a lower-order cubic term correction to cancel the last term on the right-hand side of \eqref{standardenergy}. Note that he also used the dispersive smoothing properties of the linear part in order to refine the argument.
Finally, the authors \cite{KP}, and independently Guo, Kwak and Kwon \cite{GKK}, proved recently that the IVP associated to \eqref{hodisp} with $l=2$ is well-posed in the energy space $H^2(\mathbb R)$. In \cite{KP}, we followed the method introduced by Ionescu, Kenig and Tataru \cite{IKT} in the context of the KP1 equation, which is based on the \lq\lq short time\rq\rq \, dyadic $X^{s,b}$ spaces. Moreover, in order to derive the crucial energy estimate, we used a modified energy defined at the dyadic level. Guo, Kwak and Kwon also used the \lq\lq short-time\rq\rq \, $X^{s,b}$ method. However, instead of modifying the energy as we did, they put an additional weight in the $X^{s,b}$ structure of the spaces in order to derive the key energy estimate. 

In the case of the fifth-order KdV equation, we would like also to mention the works \cite{Daw} for unique continuation properties (see also \cite{Isa} in the case of the KdV hierarchy), \cite{IsLiPo} for decay properties and \cite{SeSm} for the propagation of regularity.

\medskip
There are, as far as we know, no \lq\lq complete\rq\rq \,  well-posedness results for the class of equations \eqref{hodisp} in $L^2$-based Sobolev spaces $H^s$  on the line when $l\ge 3$ and on the torus when $l \ge 2$. The aim of this paper is to fill (partially) this gap by proving the following local well-posedness result for all $l \in \mathbb N$, $l \ge 2$ at high regularity on both the line and the torus.

\begin{theorem} \label{localtheory}
We denote by $M=\mathbb R$ or $\mathbb T$. Assume that $l \ge 2$ and let $s>s_l=4l-\frac92$. Then for all  $u_0 \in H^s(M)$, there exists a positive time $T=T(\|u_0\|_{H^s})$ and a unique solution $u$ to \eqref{hodisp} in the class 
$C([0,T]:H^s(M))$ satisfying $u(\cdot,0)=u_0$. Moreover, for any $0<T'<T$, there exists a neighborhood $\mathcal{U}$ of $u_0 \in H^s(M)$ such that the flow map data-solution 
\begin{equation} \label{localtheory.1}
S_{T'}^s: \mathcal{U} \longrightarrow C([0,T']:H^s(M)), \ u_0 \mapsto u \, 
\end{equation}
is continuous.
\end{theorem}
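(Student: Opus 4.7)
The plan is to prove Theorem \ref{localtheory} by a pure energy method: modify the $H^s$-norm by carefully chosen cubic and higher-order corrections so as to produce a Lyapunov functional $E^s(u)\simeq \|u\|_{H^s}^2$ whose time derivative is bounded by a polynomial in $\|u\|_{H^s}$. Existence will come from parabolic regularization $\partial_tu+c_{2l+1}\partial_x^{2l+1}u+\varepsilon\partial_x^{2M}u+\sum N_{lk}(u)=0$ for $M$ large enough, uniqueness from a similar (simpler) modified-energy estimate for the difference of two solutions, and continuous dependence from a Bona-Smith mollification of the initial datum. Because only integration by parts, the Leibniz/Kato-Ponce rule, and the Sobolev embedding $H^{1/2+}\hookrightarrow L^\infty$ are used, the argument applies uniformly to $M=\mathbb R$ and $M=\mathbb T$, circumventing the quasi-linear obstruction recalled in the introduction.

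The heart of the argument is the construction of $E^s(u)$. Starting from $E^s_0(u)=\tfrac12\|u\|_{H^s}^2$ and differentiating along the flow, the standard Leibniz $+$ IBP machinery reduces matters to \emph{bad} polylinear integrals $\int \partial_x^{\alpha_1}u\cdots\partial_x^{\alpha_p}u\,dx$ in which at least two factors carry more than $s$ derivatives. These arise only from the high-low interactions in the quadratic nonlinearity $u\,\partial_x^{2l-1}u$ (and its divergence-form cousins in \eqref{nonlinearity} with $k=2$) and from analogous terms produced by the cubic and higher-order $N_{l,k}(u)$. Following the strategy of Kwon \cite{Kw} for $l=2$, one absorbs the leading quadratic bad term by a cubic corrector $E^s_1(u)=\int \gamma_l(u)(\partial_x^{s+a}u)(\partial_x^{s+b}u)\,dx$ with $\gamma_l$ and $(a,b)$ arranged so that the contribution of the linear dispersion $-c_{2l+1}\partial_x^{2l+1}u$ to $\tfrac{d}{dt}E^s_1$ exactly cancels the bad trilinear term up to controllable remainders. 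Differentiating $E^s_1$ against the cubic nonlinearity then produces new quadrilinear bad terms, which are cancelled by a quartic corrector $E^s_2$, and the process terminates after finitely many iterations because the rank of every monomial in \eqref{nonlinearity} is fixed at $l+\tfrac32$. The threshold $s>s_l=4l-\tfrac92$ turns out to be precisely what is needed to estimate every remaining multilinear residual by placing its lowest-derivative factor in $L^\infty$ via Sobolev embedding.

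Granted the a priori bound $\tfrac{d}{dt}E^s(u^\varepsilon)\le C(\|u^\varepsilon\|_{H^s})\|u^\varepsilon\|_{H^s}^2$ uniformly in $\varepsilon>0$, Gronwall's inequality provides a common lifespan $T=T(\|u_0\|_{H^s})$ on which the regularized solutions stay bounded in $H^s(M)$; weak-$\ast$ compactness and standard time-translation arguments then yield a solution $u\in L^\infty([0,T];H^s(M))$ to \eqref{hodisp}. Uniqueness is proved by an analogous modified-energy estimate for $v=u_1-u_2$ at a lower regularity (e.g. $L^2$), using that the linearization of \eqref{hodisp} around a fixed $H^s$ solution has the same high-low structure and hence admits the same type of corrector. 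Strong continuity $u\in C([0,T];H^s(M))$ and continuous dependence \eqref{localtheory.1} of the flow map are obtained by the classical Bona-Smith scheme: mollify $u_0$ to $u_0^N$ at scale $N$, use uniform persistence of the regularized solutions at level $H^{s+\kappa}$ together with the uniqueness bound at low regularity to compare, and interpolate to obtain convergence in $H^s$ as $N\to\infty$.

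The main obstacle is the systematic construction of the corrector tower $E^s_1,E^s_2,\dots$: at each step the cancellation of the current bad terms must be engineered so that the newly generated bad terms are of strictly higher multilinearity and strictly lower ``badness'' (measured by the number of factors with more than $s$ derivatives). Unlike in the KdV hierarchy \eqref{KdVhierarchy} itself, where the Hamiltonian structure guides the construction, the general class \eqref{hodisp}-\eqref{nonlinearity} offers no such structure, so the cancellations must be performed term-by-term while keeping careful bookkeeping of the rank $r=l+\tfrac32$. Verifying that the regularity threshold $s>4l-\tfrac92$ really suffices to close every one of the residual multilinear forms is where the most delicate derivative counting takes place.
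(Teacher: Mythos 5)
Your plan follows essentially the same route as the paper: a modified energy of the form $E^s(u)=\tfrac12\|u\|_{H^s}^2+\sum_{k=3}^{l+1}T^s_k(u)$ built by iterated cancellation, parabolic regularization for existence, an $L^2$-level difference estimate for uniqueness, and the Bona--Smith scheme for persistence and continuity of the flow. Two points, however, are glossed over in a way that would prevent the argument from closing as written.

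First, the coercivity $E^s(u)\simeq\|u\|_{H^s}^2$ is simply false for large data: the cubic correctors are of size $\|u\|_{H^s}^3$ and can overwhelm the quadratic part, so the Gronwall step you invoke does not apply directly. The paper repairs this by exploiting the scaling $u_\lambda(x,t)=\lambda^2u(\lambda x,\lambda^{2l+1}t)$ to reduce to data with $\|u_0\|_{H^s}\le\delta$ small, at the price of a lifespan $T\sim\min\{\|u_0\|_{H^s}^{-2(2l+1)/3},1\}$; some such reduction (or a bootstrap keeping $\|u\|_{H^s}$ in the coercive regime) must be made explicit. Second, your description of the corrector as a single term cancelling ``the'' bad trilinear integral understates the combinatorial content of the first step. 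After applying the generalized Kato--Ponce commutator expansion and integrating by parts, the quadratic energy identity leaves $l-1$ distinct uncontrollable terms $\sum_{j=1}^{l-1}\beta_j\int\partial_x^{2(l-j)-1}u\,(D^s\partial_x^ju)^2$, so one needs a family of $l-1$ cubic correctors $T^s_{3,j}$, and the coefficients are determined by a triangular linear system. Its solvability is not automatic: it rests on the identity $\alpha_{l,l}=(-1)^{l+1}(2l+1)\neq0$ for the leading coefficient in the integration-by-parts formula $I_{2l+1}(w,f,g)=\sum_{j=1}^l\alpha_{j,l}\int\partial_x^{2(l-j)+1}w\,\partial_x^jf\,\partial_x^jg$, which the paper proves by an inductive binomial computation (its Lemma 2.2). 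Without verifying this nondegeneracy, the ``cancellation can be engineered'' step of your tower is an assertion rather than a proof; the same invertibility is reused at each subsequent multilinear level. The remaining ingredients of your outline (regularization with a dissipative term of order $2l+2$, the $L^2$ difference estimate using high/low frequency splitting so that $D^{-2-j}\partial_xv_{high}$ makes sense, and the Bona--Smith interpolation giving the $\epsilon^{-(2l-k)}\cdot o(\epsilon^{s-1/2})$ gain under $s>4l-\tfrac92$) match the paper's execution.
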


\begin{remark} Of course, in the case $l=2$ and $M=\mathbb R$, local well-posedness results are already known in $H^s(\mathbb R)$ for $s \ge 4$ in \cite{Po}, $s>\frac52$ in \cite{Kw} and $s \ge 2$ in \cite{KP,GKK}.
\end{remark}

\begin{remark} 
Although Schwarz's uniqueness result \cite{Sch} for the KdV hierarchy \eqref{KdVhierarchy} is obtained at lower regularities for large $l$ than in Theorem \ref{localtheory}, his result does not seem to apply when perturbating the coefficients in \eqref{KdVhierarchy}, since it depends on the Hamiltonian structure of the equations. On the other hand, the uniqueness result in Theorem \ref{localtheory} holds for the whole class \eqref{hodisp}. 

Moreover, we also prove the continuous dependence of the flow, providing the first \lq\lq complete\rq\rq \,  well-posedness result for the KdV hierarchy in $L^2$ based Sobolev spaces $H^s$.
\end{remark}

When proving Theorem \ref{localtheory}, we adapt  Kwon's modified energy argument  \cite{Kw} for the fifth-order KdV equation \eqref{5KdV} to the higher-order equations in \eqref{hodisp}. For the sake of simplicity, we will work with the equation 
\begin{equation} \label{homodel}
\partial_tu+\partial_x^{2l+1}u=u\partial_x^{2l-1}u \, , \quad l \ge 2 \, ,
\end{equation}
which is a particular case of \eqref{hodisp} and whose nonlinearity $u\partial_x^{2l-1}u$ presents the worst type of high-low frequency interactions in \eqref{nonlinearity}. 

Our proof is based on energy estimates. By using higher-order commutator estimates, we obtain that
\begin{equation} \label{homodel.1}
\frac12\frac{d}{dt}\|u\|_{H^s}^2=\mathcal{O}(\|\partial_x^{l-1}u\|_{L^{\infty}}\|u\|_{H^s}^2)+\sum_{j=1}^{l-1}\beta_j\int \partial_x^{2(l-j)-1}u(D^s\partial_x^ju)^2 \, ,
\end{equation}
where $\beta_j$, $j=1\cdots l-1,$ are real numbers. Since the $l-1$ terms appearing in  the right-hand side of \eqref{homodel.1} cannot be handled directly by integration by parts, we need to add $l-1$ correcting cubic terms to the energy in order to cancel them out. We then modify the energy as $E^s(u)=\frac12\|u\|_{H^s}^2+T^s_3(u),$ where $T^s_3(u)=\sum_{j=0}^{l-2}\gamma_jT^s_{3,j}(u)$ and
\begin{displaymath} 
T_{3,j}^s(u)=\left\{\begin{array}{ll}
\int\partial_{x}^{2j}u(D^{s-2-j}\partial_xu)^2 & \text{if} \ j \ \text{is even} \\ 
\int\partial_{x}^{2j}u(D^{s-1-j}u)^2 & \text{if} \ j \ \text{is odd} 
\end{array} \right.  , \ \text{for} \ 0 \le j \le l-2 .
\end{displaymath} 
Let us denote by $X^s_{3 \to 3}(u)$, respectively $X^s_{3 \to 4}(u)$, the cubic terms coming from the linear part of \eqref{homodel}, respectively the fourth-order terms coming from the nonlinear part of \eqref{homodel}, when deriving $T_3^s(u)$. In other words, we have that
$$\frac{d}{dt}T^s_3(u)=X^s_{3 \to 3}(u)+X^s_{3 \to 4}(u) \, .$$
By choosing carefully the coefficients $\gamma_{j}$, $0 \le j \le l-2$, we obtain 
\begin{displaymath} \left| \frac12\frac{d}{dt}\|u\|_{H^s}^2+X^s_{3 \to 3}(u) \right| \lesssim \Big(\sum_{j=0}^{l-2}\|\partial_x^{2l-1+2j}u\|_{L^{\infty}}\Big)\|u\|_{H^s}^2 \, .
\end{displaymath}
Then, we use the bound $\sum_{j=0}^{l-2}\|\partial_x^{2l-1+2j}u\|_{L^{\infty}} \lesssim \|u\|_{H^s}$, due to the Sobolev embedding. This provides the condition  $s>4l-\frac92$ in Theorem \ref{localtheory}. It remains to control $X^s_{3 \to 4}(u)$. In the case $l=2$, the bound follows directly from the Kato-Ponce commutator estimate. In the case $l \ge 3$, we repeat the argument and  modify the energy by a fourth order term $T^s_4(u)$ in order to cancel the bad terms in $X^s_{3 \to 4}(u)$. This process will finish after $l-1$ steps and we will obtain an energy estimate
\begin{displaymath}
\Big|\frac{d}{dt}E^s(u)(t)\Big|  \lesssim \left(\sum_{k=1}^{l-1}\|u(t)\|_{H^s}^k\right)\|u(t)\|_{H^s}^2 \, ,
\end{displaymath}
for an energy of the form $E^s(u)=\frac12\|u\|_{H^s}^2+T^s_3(u)+\cdots+T^s_{l+1}(u)$, where $T_j^s(u)$ are terms of order $j$ in $u$ for $j=3,\cdots,l+1$.

This energy is coercive if $\|u\|_{H^s}$ is small enough. Moreover, by using a scaling argument, it is always possible to assume that the initial data are sufficiently small. Hence, we deduce \textit{a priori} estimates for solutions of \eqref{homodel} at the $H^s$-level . The proof of the existence follows then by a classical parabolic regularization argument.

The same modified energy argument also applies to derive energy estimates for the differences of two solutions of \eqref{homodel} at the $L^2$-level and also at the $H^s$-level (see Proposition \ref{diffEE} below). Then, the  proof of the uniqueness follows directly from the $L^2$-energy estimate, while we need to combine the $H^s$-energy estimate with the Bona-Smith argument \cite{BoSm} in order to deduce the persistence property and the continuity of the flow.

Note that in the continuous setting (when $x \in \mathbb R$), it would be possible to use the dispersive properties of the linear part of \eqref{homodel} as in \cite{Kw} in order to lower the regularity $s>4l-\frac92$ in Theorem \ref{localtheory}. We did not pursue this issue here. However, it is worth to note that, since our proof uses only energy estimates and Sobolev embedding, it applies similarly in the periodic setting.

\medskip
Finally, we would like to point out that our argument does not seem to apply to the whole class of equations \eqref{KdV_h} considered in \cite{KPV2}. For example, if we want to deal with the equation
 \begin{equation} \label{hoKdV}
 \partial_tu+\partial_x^3u+u\partial_x^2u=0 \, ,
 \end{equation}
we would need to modify the energy by adding a term of the form $\int \partial_x^{-1}u(D^{s}u)^2$, which would not be well defined at the $H^s$-level\footnote{Indeed, their is a problem  to define $\partial_x^{-1}u$ at low frequency.}. Note that the $C^2$ ill-posedness result in \cite{Pi} also apply to the IVP associated to \eqref{hoKdV} in any Sobolev spaces $H^s$. Recently Harrop-Griffiths \cite{HaGr} obtained well-posedness results for \eqref{hoKdV} with initial data in a translation invariant space $l^1H^s \subset H^s$. However, when the nonlinearity has a special structure, as for example in the following higher-order Benjamin-Ono equation 
\begin{equation} \label{hoBO}
\partial_tv-\mathcal{H}\partial^2_xv-
           \epsilon \partial_x^3v=v\partial_xv-\epsilon
           \partial_x(v\mathcal{H}\partial_xv+\mathcal{H}(v\partial_xv)) \, ,
\end{equation}
it is still possible to obtain well-posedness results in $H^s$ spaces (see  \cite{LiPiPo} and \cite{MoPi} where global well-posedness for \eqref{hoBO} was obtained in the energy space $H^1(\mathbb R)$).

\bigskip
The paper is organized as follows: in Section \ref{PrelimEst}, we derive a key technical Lemma based on integration by parts and state the Kato-Ponce type commutator estimates both on the line and on the torus. Those results will be useful to derive the estimates in Section \ref{EnergyEst}. With the energy estimates in hand, we give the proof of Theorem \ref{localtheory} in Section \ref{ProofTheo} by using the parabolic regularization method. Note that for the sake of clarity, we chose to derive the energy estimates for the equation without dissipation in Section \ref{EnergyEst}. We explain then how to adapt the proof in presence of dissipation in Section \ref{ProofTheo}.

\bigskip
\noindent \textbf{Notation.} 
For any positive numbers $a$ and $b$, the notation $a \lesssim b$ means
that there exists a positive constant $C$ such that $a \le C b$,
and we denote $a \sim b$ when, $a \lesssim b$ and $b \lesssim a$.
We will also denote by $C$ any universal positive constant. In particular, the value of $C$ can change from line to line.

Let $\mathcal{S}(\mathbb R)$ be the Schwartz space and $\mathcal{P}$ the space of $C^{\infty}$, $2\pi$ periodic functions on $\mathbb R$. Then, the space of tempered distributions $\mathcal{S}'(\mathbb R)$ is the dual space of $\mathcal{S}(\mathbb R)$, and the space of periodic distributions $\mathcal{P}'$ is the dual space of $\mathcal{P}$.

For $f \in \mathcal{S}'(\mathbb R)$ or $\mathcal{P}'$, $\mathcal{F}f=\widehat{f}$ will denote its  Fourier
transform. 

Let $s \in \mathbb R$,  $f \in \mathcal{S}(\mathbb R)$ or $\mathcal{P}$, we define the Bessel and Riesz potentials of order $-s$, $J^s$ and $D^s$, by
\begin{displaymath}
J^sf=\mathcal{F}^{-1}\big((1+|\xi|^2)^{\frac{s}{2}}
\mathcal{F}(f)\big) \quad \text{and} \quad
D^sf=\mathcal{F}^{-1}\big(|\xi|^s \mathcal{F}(f)\big).
\end{displaymath}

 For $M=\mathbb R$ or $\mathbb T=\mathbb R/2\pi \mathbb Z$  and $1 \le p\le +\infty$, $L^p(M)$ denotes the usual Lebesgue space on $M$ with associated norm $\|\cdot\|_{L^p}$ or $\|\cdot\|_{L^p(M)}$ when we want to differentiate the cases $M=\mathbb R$ or $\mathbb T$.

For $M=\mathbb R$ or $\mathbb T$, $H^s(M)$ denotes  the nonhomogeneous Sobolev space defined as the completion of $\mathcal{S}'(\mathbb R)$ or $\mathcal{P}'$ under the norm
$\|f\|_{H^s}=\|J^sf\|_{L^2}$.

If $B$ is one of the spaces defined above, $1 \le p \le +\infty$ and $T>0$, we define the spaces $L^p_tB_x=L^p([0,+\infty) : B)$ and $L^p_TB_x=L^p([0,T] : B)$ equipped with the norms
$$\|u\|_{L^p_tB_x}=\Big(\int_0^{+\infty}\|u(\cdot,t)\|_{B_x}^pdt\Big)^{\frac1p} \quad \text{and} \quad 
\|u\|_{L^p_TB_x}=\Big(\int_0^{T}\|u(\cdot,t)\|_{B_x}^pdt\Big)^{\frac1p}$$
with obvious modifications for $p=+\infty$.

We introduce the operators $P_{low}$ and $P_{high}$ of projection into low and high frequencies. Let $\eta \in C^{\infty}_c(\mathbb R)$ be an even cut-off function satisfying $0 \le \eta \le 1$, $\text{supp}\, \eta \subset [-2,2]$ and $\eta_{_{[-1,1]}} \equiv 1$. Then we define $P_{low}$ and $P_{high}$ on $\mathbb R$ \textit{via} Fourier transform by 
\begin{equation} \label{Plow}
P_{low}f=\big(\eta \widehat{f}\big)^{\vee} \quad \text{and} \quad P_{high}f=(1-P_{low})f \, .
\end{equation}
In the periodic case, $P_{low}$ is simply defined by $(P_{low}f)^{\wedge}=\widehat{f}(0)$.
We also denote $f_{low}=P_{low}f$ and $f_{high}=P_{high}f$. It turns out that 
$$ \|f\|_{H^s} \sim \|f_{low}\|_{L^2}+\|D^sf_{high}\|_{L^2} \, ,$$
for any function $f \in H^s(M)$.

\section{Preliminary estimates} \label{PrelimEst}

\subsection{Technical lemma}[Integration by parts]

\begin{definition} \label{I}
Let $f, \ g,$ and $h$ be smooth functions defined on $\mathbb R$ or $\mathbb T$. For any $l \in \mathbb N$, we define 
\begin{equation} \label{I.1}
I_{2l+1}\big(w,f,g\big)=\int \partial_x^{2l+1}w f g+\int w \partial_x^{2l+1}f g+\int w  f\partial_x^{2l+1}g \ .
\end{equation}
\end{definition}

\begin{lemma} \label{technical}
We have that $I_1(w,f,g)=0$.  For $l \in \mathbb N$, $l \ge 1$, there exist real numbers $\alpha_{j,l}$ for $1 \le j \le l$ such that
\begin{equation} \label{technical.1}
I_{2l+1}(w,f,g)=\sum_{j=1}^l \alpha_{j,l} \int \partial_x^{2(l-j)+1}w \partial_x^jf \partial_x^j g \ .
\end{equation}
Moreover 
\begin{equation}\label{technical.2}
\alpha_{l,l}=(-1)^{l+1}(2l+1) \quad \text{ for} \ l \ge 1.
\end{equation}
\end{lemma}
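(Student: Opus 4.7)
I would split the proof into three stages. The $l=0$ case is immediate: $I_1(w,f,g)=\int\partial_x(wfg)\,dx=0$, since the boundary terms vanish for Schwartz or periodic $w,f,g$.

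For $l\ge 1$, to establish the existence of the representation \eqref{technical.1}, I would first integrate by parts $2l+1$ times in the $\partial_x^{2l+1}w$-term and expand via Leibniz:
\[
\int \partial_x^{2l+1}w\cdot fg\,dx \;=\; -\sum_{k=0}^{2l+1}\binom{2l+1}{k}\int w\,\partial_x^{2l+1-k}f\,\partial_x^k g\,dx .
\]
The $k=0$ and $k=2l+1$ endpoint contributions cancel exactly the two remaining summands of $I_{2l+1}(w,f,g)$, leaving
\[
I_{2l+1}(w,f,g)\;=\;-\sum_{k=1}^{2l}\binom{2l+1}{k}\int w\,\partial_x^{2l+1-k}f\,\partial_x^k g\,dx .
\]
Using $\binom{2l+1}{k}=\binom{2l+1}{2l+1-k}$, I would pair the $k$-th and $(2l+1-k)$-th summands into symmetrized expressions
\[
S_{r,a,b}\;:=\;\int \partial_x^r w\,\partial_x^a f\,\partial_x^b g\,dx+\int \partial_x^r w\,\partial_x^b f\,\partial_x^a g\,dx, \qquad r+a+b=2l+1,
\]
and induct on $|a-b|$. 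A single integration by parts (moving one derivative off the highest-order factor in each summand) yields $S_{r,a,b}=-S_{r+1,a,b-1}-S_{r,a+1,b-1}$: when $b>a+1$ both new pairs have strictly smaller gap, while when $b=a+1$ the symmetry $S_{r,a+1,a}=S_{r,a,a+1}$ collapses the identity to $S_{r,a,a+1}=-\tfrac12 S_{r+1,a,a}$, already in symmetric form. Iterating reduces everything to $a=b$ pairs, whose $w$-factor carries the odd order $r=2(l-a)+1$; these are precisely the integrals on the right-hand side of \eqref{technical.1}.

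To identify $\alpha_{l,l}$, I would pass to the Fourier side: since both sides of \eqref{technical.1} are finite-order trilinear differential operators, the identity is equivalent to the equality of their symbols on the hyperplane $\xi_1+\xi_2+\xi_3=0$. Parametrizing by $\xi_2=u$, $\xi_3=v$, $\xi_1=-(u+v)$, this becomes the polynomial identity
\[
(u+v)^{2l+1}-u^{2l+1}-v^{2l+1}\;=\;\sum_{j=1}^l\alpha_{j,l}(u+v)^{2(l-j)+1}u^jv^j ,
\]
and the monomials on the right, being the distinct $s^{2(l-j)+1}p^j$ in the algebraically independent $s=u+v$ and $p=uv$, are linearly independent in $\mathbb R[u,v]$, so the $\alpha_{j,l}$ are unique. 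Substituting $v=-u+\epsilon$ and using that $2l+1$ is odd, the left side expands as $\epsilon^{2l+1}-u^{2l+1}+(u-\epsilon)^{2l+1}=-(2l+1)u^{2l}\epsilon+O(\epsilon^2)$, while on the right only the $j=l$ term is of order $\epsilon$, contributing $(-1)^l\alpha_{l,l}u^{2l}\epsilon+O(\epsilon^2)$. Matching the coefficient of $\epsilon u^{2l}$ gives $\alpha_{l,l}=(-1)^{l+1}(2l+1)$.

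The main obstacle is keeping the bookkeeping of the integration-by-parts step clean: the symmetrized object $S_{r,a,b}$ is designed precisely so that $|a-b|$ strictly decreases under every reduction, which drives a clean induction on the gap; attempting to track $\alpha_{l,l}$ directly through this cascade would be cumbersome, which is why the symbol-level specialization $v=-u+\epsilon$ is the natural route to its value.
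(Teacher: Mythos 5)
Your proposal is correct, and it diverges from the paper's proof in two places, most substantially in the computation of $\alpha_{l,l}$. For the representation \eqref{technical.1}, both arguments start identically (integrate by parts $2l+1$ times, expand by Leibniz, cancel the endpoint terms, and pair $k$ with $2l+1-k$); the paper then recognizes each symmetrized pair as $I_{2l-(2j-1)}(w,\partial_x^jf,\partial_x^jg)$ minus a correction and closes an induction on $l$, whereas you run a self-contained cascade on the symmetrized integrals $S_{r,a,b}$ with induction on the gap $|a-b|$. These are close in spirit, but yours avoids invoking the lower-order $I$'s at the cost of slightly heavier bookkeeping (which your gap-monotonicity observation handles cleanly; note also that your reduction keeps both lower indices $\ge 1$ and the $w$-index $\ge 1$, so only the admissible terms $1\le j\le l$ survive). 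For \eqref{technical.2} the routes are genuinely different: the paper extracts the recursion $a_l=\binom{2l+1}{l}-\sum_{j=1}^{l-1}\binom{2l+1}{j}a_{l-j}$ from the construction and verifies $a_l=(-1)^{l+1}(2l+1)$ by a second induction with several pages' worth of binomial identities (split by parity of $l$), while you pass to the trilinear symbol on $\xi_1+\xi_2+\xi_3=0$, reduce to the polynomial identity $(u+v)^{2l+1}-u^{2l+1}-v^{2l+1}=\sum_j\alpha_{j,l}(u+v)^{2(l-j)+1}u^jv^j$, and read off $\alpha_{l,l}$ from the coefficient of $\epsilon u^{2l}$ after setting $v=-u+\epsilon$. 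Your symbol argument is shorter, gives uniqueness of all the $\alpha_{j,l}$ for free, and would in principle yield the other coefficients as well; the paper's combinatorial route stays entirely at the level of integration by parts and needs no Fourier-side identification of multipliers (a standard but not entirely free step, since one must know that a trilinear form vanishing on all test functions forces its symbol to vanish on the hyperplane).
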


\begin{proof} 
First, observe integrating by parts that 
\begin{displaymath}
I_1(w,f,g)=-\int  w \partial_x(f g)+\int w \partial_x f g + \int w f \partial_x g =0 \ .
\end{displaymath}

Next, fix some $l \in \mathbb N$, $l \ge 1$ and assume that \eqref{technical.1} holds true for $1 \le k \le l-1$. Then, we  integrate by parts and use the Leibniz rule to obtain
\begin{equation} \label{technical.3}
\begin{split}
I_{2l+1}(w,f,g) & =-\int w \partial_x^{2l+1}(f g)+\int w \partial_x^{2l+1}f g+\int w  f\partial_x^{2l+1}g \\ 
& =- \sum_{j=1}^{2l}\begin{pmatrix} 2l+1 \\ j \end{pmatrix} \int w \partial_x^jf \partial_x^{2l+1-j}g \ ,
\end{split}
\end{equation}
where $\begin{pmatrix} n \\ k \end{pmatrix}=\frac{n!}{k!(n-k)!}$. We rearrange the terms on the right hand-side of \eqref{technical.3} two by two by using that $\begin{pmatrix} n \\ k \end{pmatrix}=\begin{pmatrix} n \\ n-k \end{pmatrix}$ so that
\begin{equation} 
\begin{split}
I_{2l+1}(w,f,g)&=-\sum_{j=1}^l \begin{pmatrix} 2l+1 \\ j \end{pmatrix} \left( \int w \partial_x^jf \partial_x^{2l+1-j}g+\int w \partial_x^{2l+1-j}f \partial_x^{j}g\right) \\ &
=-\sum_{j=1}^l \begin{pmatrix} 2l+1 \\ j \end{pmatrix}\left(I_{2l-(2j-1)}(w,\partial_x^jf,\partial_x^jg)-\int \partial_x^{2l-(2j-1)}w\partial_x^jf\partial_x^jg \right),
\end{split}
\end{equation}
which proves \eqref{technical.1} in the case $l$ by using \eqref{technical.1} in the case $1 \le k \le l-1$. This concludes the proof of formula \eqref{technical.1} by induction. 

To simplify the notations, let us denote $a_l=\alpha_{l,l}$. Clearly, $a_1=3$. We also observe from the above construction that 
\begin{equation} \label{technical.4}
a_l=\begin{pmatrix} 2l+1 \\ l \end{pmatrix}-\sum_{j=1}^{l-1}\begin{pmatrix} 2l+1 \\ j \end{pmatrix}a_{l-j}, \quad \text{for} \ l \ge 2 \, .
\end{equation}
Next, we prove formula \eqref{technical.2} by induction. Let $l \ge 2$. Assume that formula \eqref{technical.2} is true for all $1 \le j \le l-1$. Without loss of generality, we assume that $l$ is even, $l=2l'$. Then, we deduce from \eqref{technical.4}  that
\begin{displaymath} \label{technical.5}
\begin{split}
a_l&=-\sum_{j=1}^l \begin{pmatrix} 2l+1 \\ j \end{pmatrix} (-1)^{j+1}(2l-2j+1) \\ 
& =\sum_{j'=1}^{l'}\begin{pmatrix} 2l+1 \\ 2j' \end{pmatrix} (2l-4j'+1)-
\sum_{j'=1}^{l'}\begin{pmatrix} 2l+1 \\ 2j' -1\end{pmatrix} (2l-4j'+3) \\ 
& =(2l+1)\sum_{j'=1}^{l'}\left\{\begin{pmatrix} 2l+1 \\ 2j' \end{pmatrix}-\begin{pmatrix} 2l+1 \\ 2j'-1 \end{pmatrix}\right\}\\ & \quad -2\sum_{j'=1}^{l'}\begin{pmatrix} 2l+1 \\ 2j' \end{pmatrix}2j'+2\sum_{j'=1}^{l'}\begin{pmatrix} 2l+1 \\ 2j'-1 \end{pmatrix}(2j'-1) \\ 
& =(2l+1)\sum_{j'=1}^{l'}\left\{\begin{pmatrix} 2l+1 \\ 2j' \end{pmatrix}-\begin{pmatrix} 2l+1 \\ 2j'-1 \end{pmatrix}-2\begin{pmatrix} 2l \\ 2j'-1 \end{pmatrix}+2\begin{pmatrix} 2l\\ 2j'-2 \end{pmatrix}\right\} \, .
\end{split}
\end{displaymath}
Since $\begin{pmatrix} n \\ k \end{pmatrix}=\begin{pmatrix} n -1\\ k \end{pmatrix}+\begin{pmatrix} n -1\\ k-1 \end{pmatrix}$, we have 
\begin{displaymath} \label{technical.5}
\begin{split}
a_l&=(2l+1)\sum_{j'=1}^{l'}\left\{\begin{pmatrix} 2l \\ 2j' \end{pmatrix}-2\begin{pmatrix} 2l \\ 2j'-1 \end{pmatrix}+\begin{pmatrix} 2l\\ 2j'-2 \end{pmatrix}\right\} \\ & 
=(2l+1)\left\{\sum_{j'=1}^{l'}\left\{\begin{pmatrix} 2l-1 \\ 2j' \end{pmatrix}-\begin{pmatrix} 2l-1 \\ 2j'-1 \end{pmatrix}-\begin{pmatrix} 2l-1\\ 2j'-2 \end{pmatrix}\right\} + \sum_{j'=2}^{l'}
\begin{pmatrix} 2l-1 \\ 2j'-3 \end{pmatrix}\right\}\\ & 
=(2l+1)\left\{\begin{pmatrix} 2l-1 \\ l \end{pmatrix}-\begin{pmatrix} 2l-1 \\ l -1\end{pmatrix} -\begin{pmatrix} 2l-1 \\ 0 \end{pmatrix}\right\}
\end{split}
\end{displaymath}
so that $a_l=-(2l+1)$. This finishes to prove formula \eqref{technical.2} in the case where $l$ is even. The case where $l$ is odd follows similarly. 
\end{proof}

\subsection{Commutator estimates}
First, we state the Kato-Ponce commutator estimate \cite{KaPo} (see also Lemma 2.2 in \cite{Po2} for the second estimate) in the case where the functions are defined in $\mathbb R$.
\begin{lemma}[Kato-Ponce commutator estimates] \label{Kato-Ponce}
Let $s \ge 1$, $p,\ p_2, \ p_3 \in (1,\infty)$ and $p_1, \ p_4 \in (1,\infty]$ be such that $\frac1p=\frac1{p_1}+\frac1{p_2}=\frac1{p_3}+\frac1{p_4}$ . Then, 
\begin{equation}  \label{Kato-Ponce1}
\|[J^s,f]g\|_{L^p} \lesssim \|\partial_xf\|_{L^{p_1}}\|J^{s-1}g\|_{L^{p_2}}+\|J^sf\|_{L^{p_3}}\|g\|_{L^{p_4}} \, ,
\end{equation}
and 
\begin{equation}  \label{Kato-Ponce2}
\|J^s(fg)\|_{L^p} \lesssim \|f\|_{L^{p_1}}\|J^{s}g\|_{L^{p_2}}+\|J^sf\|_{L^{p_3}}\|g\|_{L^{p_4}} \, ,
\end{equation}
for any $f, \, g $ defined on $\mathbb R$.
\end{lemma}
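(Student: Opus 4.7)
The plan is to establish both inequalities via a Bony paraproduct / Littlewood-Paley decomposition. Let $\{P_k\}_{k\ge 0}$ be a standard inhomogeneous Littlewood-Paley partition of unity on $\mathbb{R}$ (with $P_0$ supported on $\{|\xi|\lesssim 1\}$ and $P_k$ on $\{|\xi|\sim 2^k\}$ for $k\ge 1$), write $f_j = P_j f$ and $g_k = P_k g$, and split the product as
\begin{displaymath}
fg = \Pi_{\mathrm{lh}}(f,g) + \Pi_{\mathrm{hh}}(f,g) + \Pi_{\mathrm{hl}}(f,g),
\end{displaymath}
where $\Pi_{\mathrm{lh}}$, $\Pi_{\mathrm{hh}}$, $\Pi_{\mathrm{hl}}$ collect the pairs $(j,k)$ with $j\le k-3$, $|j-k|\le 2$, and $k\le j-3$ respectively.

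For the commutator estimate \eqref{Kato-Ponce1}, I would treat the three paraproduct pieces of $[J^s,f]g = J^s(fg) - f\,J^s g$ separately. The crucial piece is the low-high one, where I would use that on the spectral support of $\widehat{f_j}(\xi-\eta)\,\widehat{g_k}(\eta)$ with $j\ll k$ one has $|\xi-\eta|\sim 2^j$ and $|\eta|\sim 2^k$, hence by the fundamental theorem of calculus
\begin{displaymath}
\bigl|\langle \xi\rangle^s - \langle \eta\rangle^s\bigr| \lesssim |\xi-\eta|\,\langle \eta\rangle^{s-1},
\end{displaymath}
which amounts to removing one derivative from $J^s g$ and placing it on $f_j$. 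A bilinear Coifman-Meyer multiplier bound, combined with the vector-valued Fefferman-Stein maximal inequality used to reassemble the dyadic pieces, produces the first term on the right-hand side of \eqref{Kato-Ponce1}. For the high-low and high-high pieces one has $j\gtrsim k$, so after checking that the output frequency is comparable to $2^j$ one may place $J^s$ on the $f$-factor, and H\"older's inequality together with Bernstein's lemma yield the second term.

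The second inequality \eqref{Kato-Ponce2} is proved by the same decomposition, but is simpler since no cancellation is needed: $J^s\Pi_{\mathrm{lh}}(f,g)$ is output-localized at frequency $\sim 2^k$, so it is controlled by $\|f\|_{L^{p_1}}\|J^sg\|_{L^{p_2}}$ after applying the Fefferman-Stein inequality; $J^s\Pi_{\mathrm{hl}}(f,g)$ by symmetry gives the $\|J^sf\|_{L^{p_3}}\|g\|_{L^{p_4}}$ contribution; and for $J^s\Pi_{\mathrm{hh}}(f,g)$ the almost-orthogonality of the diagonal paraproduct combined with a Bernstein gain in the low output frequency allows one to place $J^s$ on either factor. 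The main delicate point is the endpoint case where $p_1$ or $p_4$ equals $\infty$, which forces one to avoid maximal-function estimates on that factor and instead extract the $L^\infty$-bound directly from the low-frequency convolution kernel, as in \cite{KaPo} and the refinement in \cite{Po2}. Both inequalities then follow by summing the dyadic pieces.
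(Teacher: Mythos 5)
The paper offers no proof of this lemma: estimate \eqref{Kato-Ponce1} is quoted from the appendix of Kato--Ponce \cite{KaPo} and estimate \eqref{Kato-Ponce2} from Lemma 2.2 of \cite{Po2}, so there is no internal argument to measure yours against. Your paraproduct proof is a correct and standard route to both inequalities, and it is essentially the dyadic incarnation of the original one: Kato and Ponce likewise split the bilinear symbol $\langle \xi\rangle^s-\langle \eta\rangle^s$ according to whether $|\xi-\eta|$ is small or large relative to $\langle \eta\rangle$, use the mean value theorem in the first region to move one derivative onto $f$ (this is exactly where the hypothesis $s\ge 1$ makes the bound $|\langle \xi\rangle^s-\langle \eta\rangle^s|\lesssim |\xi-\eta|\langle \eta\rangle^{s-1}$ uniform on the low--high region), and invoke the Coifman--Meyer theorem \cite{CM} throughout; your Littlewood--Paley trichotomy discretizes that splitting and handles the high--high interactions more transparently. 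Two points should not be left implicit in a written-out version. First, the renormalized symbol $\bigl(\langle \xi\rangle^s-\langle \eta\rangle^s\bigr)/\bigl((\xi-\eta)\langle \eta\rangle^{s-1}\bigr)$, localized to the low--high region, must be checked to satisfy the Coifman--Meyer derivative bounds; this is routine but is the heart of the matter. Second, in the high--low and high--high regimes the output frequency is only $\lesssim 2^j$ (not $\sim 2^j$ in the high--high case), and after placing $J^s$ on the $f$-factor the dyadic sum has no geometric gain left, so reassembling the pieces requires Cauchy--Schwarz together with the square-function characterization of $L^{p_3}$ (or the Fefferman--Stein inequality you mention) rather than a crude triangle inequality. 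Your caveat about the endpoints $p_1,p_4=\infty$ is well taken and not an obstruction, since the Coifman--Meyer operators involved do map $L^{\infty}\times L^{p_2}\to L^{p_2}$; with these details supplied the argument closes.
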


The corresponding version of Lemma \ref{Kato-Ponce} on the torus was proved  in Lemma 9.A.1 of \cite{IoKe} (see also estimate (4.8) in \cite{TzVi}).
\begin{lemma} \label{Kato-PonceT}
Let $s \ge 1$. Then, 
\begin{equation} \label{KPT.1}
\|[J^s,f]g\|_{L^2}\lesssim \big(\|f\|_{L^{\infty}}+\|\partial_xf\|_{L^{\infty}}\big)\|g\|_{H^{s-1}}+\|g\|_{L^{\infty}}\|f\|_{H^s} \, ,
\end{equation}
and 
\begin{equation} \label{KPT.100}
\|J^s(fg)\|_{L^2} \lesssim \|f\|_{L^{\infty}}\|J^{s}g\|_{L^{2}}+\|g\|_{L^{\infty}}\|J^sf\|_{L^{2}} \, ,
\end{equation}
for  any $f, \, g$ defined on $\mathbb T$.
\end{lemma}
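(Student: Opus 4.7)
The plan is to adapt the standard proof of the real-line Kato-Ponce estimates (Lemma \ref{Kato-Ponce}) to the periodic setting by means of a Littlewood-Paley paradecomposition. On $\mathbb T$, let $\{P_N\}_{N\in 2^{\mathbb N_0}}$ be a dyadic decomposition of frequency, with $P_0$ the projection onto the constant mode. The usual square-function characterization $\|f\|_{H^s}^2 \sim \sum_{N\ge 1} N^{2s}\|P_N f\|_{L^2}^2 + \|P_0 f\|_{L^2}^2$ holds, and Bony's paradecomposition reads $fg = T_f g + T_g f + R(f,g)$, where $T_f g = \sum_N P_{\ll N} f \cdot P_N g$ puts $f$ at the lower frequency and $R(f,g)$ is the high-high resonant part.

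For the fractional Leibniz inequality \eqref{KPT.100}, I would bound each piece of the paradecomposition of $J^s(fg)$ separately. In $J^s(T_f g)$ the summand indexed by $N$ is localized at frequency $\sim N$, so $J^s$ essentially acts as $N^s$ on $P_N g$; the square-function estimate together with a maximal-function bound on the low-frequency factor of $f$ then yields $\|J^s(T_f g)\|_{L^2} \lesssim \|f\|_{L^\infty}\|J^s g\|_{L^2}$. The mirror piece $J^s(T_g f)$ is treated symmetrically, and the resonant part $R(f,g)$ can be summed with $J^s$ placed on either factor to produce the sum of both terms on the right of \eqref{KPT.100}.

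For the commutator \eqref{KPT.1}, the crucial gain comes from $[J^s, f]$ applied to the paraproduct $T_f g$. In Fourier variables its symbol is $\langle \xi + \eta\rangle^s - \langle \eta\rangle^s$ with $|\xi| \ll |\eta|$, and the mean value theorem bounds this by $|\xi|\langle \eta\rangle^{s-1}$; summing against $\widehat{f}(\xi) \widehat{g}(\eta)$ produces a bound by $\|\partial_x f\|_{L^\infty}\|g\|_{H^{s-1}}$. The remaining pieces, namely $J^s(T_g f)$, $J^s R(f,g)$ and the compensating terms from $f J^s g$, have no cancellation but each carries $J^s$ on $f$ paired with $g$ at comparable or lower frequency, so by the Leibniz-type argument just described they contribute at most $\|g\|_{L^\infty}\|f\|_{H^s}$. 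Finally, the contribution of the constant mode $P_0 f$ (which cannot be differentiated, so $\|\partial_x f\|_{L^\infty}$ provides no control) is absorbed by the extra $\|f\|_{L^\infty}\|g\|_{H^{s-1}}$ term on the right-hand side of \eqref{KPT.1}.

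The main obstacle is the careful treatment of the zero and low frequencies of $f$ on $\mathbb T$: unlike on $\mathbb R$, the projector $P_0$ picks out a nontrivial constant, and since the paradecomposition gains a derivative only through the mean value estimate on the symbol, the constant piece must be handled by a separate $L^\infty$-type bound, which is precisely why the right-hand side of \eqref{KPT.1} contains both $\|f\|_{L^\infty}$ and $\|\partial_x f\|_{L^\infty}$. Beyond this bookkeeping, the argument runs in parallel to the proof on the real line in \cite{KaPo}, with the sharp periodic maximal and square-function inequalities replacing their continuous counterparts.
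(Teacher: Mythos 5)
Your paradeproduct argument is sound and can be completed, but it follows a genuinely different route from the one the paper relies on. The paper does not prove Lemma \ref{Kato-PonceT} at all: it cites Lemma 9.A.1 of \cite{IoKe}, whose method (invoked again later for \eqref{CommEstimates.1T}) is to \emph{transfer} the real-line estimates of Lemma \ref{Kato-Ponce} to the torus by localizing a periodic function with a compactly supported cutoff, applying \eqref{Kato-Ponce1} on $\mathbb R$, and controlling the errors produced by the cutoff; the extra term $\|f\|_{L^{\infty}}\|g\|_{H^{s-1}}$ in \eqref{KPT.1} is precisely the price of differentiating the cutoff times $f$. Your approach instead proves the estimates intrinsically on $\mathbb T$ via Bony's decomposition: the gain of one derivative comes from the mean value bound $|\langle\xi+\eta\rangle^s-\langle\eta\rangle^s|\lesssim|\xi|\langle\eta\rangle^{s-1}$ on the low-high symbol, and the remaining pieces carry $J^s$ on the highest-frequency factor. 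What the transference proof buys is brevity (everything is reduced to the known line case); what your proof buys is self-containedness and in fact a slightly sharper conclusion, since the nonzero low modes of $f$ on $\mathbb T$ satisfy $\|P_Mf\|_{L^\infty}\lesssim M^{-1}\|\partial_xP_Mf\|_{L^\infty}$ for $M\ge1$, so the direct argument does not actually need the $\|f\|_{L^\infty}$ term the way the cutoff argument does. Two small points to tighten: (i) the constant mode of $f$ commutes with $J^s$, so its contribution to $[J^s,f]g$ is exactly zero rather than something to be ``absorbed'' by $\|f\|_{L^\infty}\|g\|_{H^{s-1}}$; (ii) the step where you ``sum against $\widehat f(\xi)\widehat g(\eta)$'' to get $\|\partial_xf\|_{L^\infty}\|g\|_{H^{s-1}}$ requires an $L^\infty\times L^2\to L^2$ bilinear multiplier theorem of Coifman--Meyer type \cite{CM} valid on $\mathbb T$ (or a transference argument for the bilinear symbol), and this is the one nontrivial ingredient your sketch should state explicitly rather than leave implicit in the phrase about periodic square-function inequalities.
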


\begin{remark}The term $\|f\|_{L^{\infty}}$ appearing on the right-hand side of \eqref{KPT.1} is necessary. For example, one could take the  function $f: \mathbb T \rightarrow \mathbb C, x \mapsto \alpha$ for some constant $\alpha$, which satisfies $\partial_xf=0$ but is still in $H^s(\mathbb T)$.
\end{remark}

As a consequence of the estimate \eqref{Kato-Ponce1}, we deduce that 
\begin{equation} \label{Kato-Ponce3}
\big\| J^s(f\partial_xg)-fJ^s\partial_xg\big\|_{L^2} \lesssim 
\|\partial_xf\|_{L^{\infty}}\|J^sg\|_{L^2}+ \|\partial_xg\|_{L^{\infty}}\|J^sf\|_{L^2}\, .
\end{equation}

Here we give a generalized version of estimate \eqref{Kato-Ponce3} with the homogeneous operator $D^s$ instead of the nonhomegenous one $J^s$. The second order case was given by Kwon in Lemma 2.3 of \cite{Kw}. 
\begin{lemma}[Generalized Kato-Ponce commutator estimates] \label{CommEstimates}
Let $s>0$ and $l \in \mathbb N, \ l \ge 2$. Then, 
\begin{equation} \label{CommEstimates.1}
\begin{split}
\big\| D^s(f\partial_x^{2l-1}g)-\sum_{j=0}^{2l-2}&\begin{pmatrix} s \\ j \end{pmatrix}\partial_x^jfD^s\partial_x^{2l-1-j}g\big\|_{L^2} \\ &\lesssim \|\partial_x^{2l-1}f\|_{L^{\infty}}\|D^sg\|_{L^2}+ \|\partial_x^{2l-1}g\|_{L^{\infty}}\|D^sf\|_{L^2}\, ,
\end{split}
\end{equation}
for  any $f, \, g$ defined on $\mathbb R$, and 
\begin{equation} \label{CommEstimates.1T}
\begin{split}
\big\| D^s(f\partial_x^{2l-1}g)-\sum_{j=0}^{2l-2}&\begin{pmatrix} s \\ j \end{pmatrix}\partial_x^jfD^s\partial_x^{2l-1-j}g\big\|_{L^2} \\ &\lesssim \sum_{j=0}^{2l-1}\|\partial_x^jf\|_{L^{\infty}}\|D^sg\|_{L^2}+ \sum_{j=0}^{2l-1}\|\partial_x^jg\|_{L^{\infty}}\|D^sf\|_{L^2}\, ,
\end{split}
\end{equation}
for  any $f, \, g$ defined on $\mathbb T$, where by convention $\begin{pmatrix} s \\ 0 \end{pmatrix}=1$ and  $\begin{pmatrix} s \\ j \end{pmatrix}=\frac{s(s-1)\cdots(s-j+1)}{j!}$ for any $s >0$ and $j \in \mathbb N$ such that $j \le s$.
\end{lemma}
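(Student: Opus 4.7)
Both estimates are bilinear Fourier-multiplier bounds, and I would prove them via a paraproduct / Littlewood-Paley decomposition on the Fourier side. Writing $\zeta=\xi-\eta$, the left-hand side of \eqref{CommEstimates.1} is the bilinear Fourier multiplier in $(f,g)$ with symbol
\[
m(\zeta,\eta) \;=\; i^{2l-1}\Bigl(\eta^{2l-1}|\zeta+\eta|^{s}-\sum_{j=0}^{2l-2}\binom{s}{j}\zeta^{j}\eta^{2l-1-j}|\eta|^{s}\Bigr),
\]
which I would split using a smooth partition of unity into three regions: low-high $|\zeta|\le|\eta|/4$, high-low $|\eta|\le|\zeta|/4$, and comparable $|\zeta|\sim|\eta|$.

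The crucial region is low-high. There $\operatorname{sign}(\zeta+\eta)=\operatorname{sign}(\eta)$, so $|\zeta+\eta|^{s}$ is smooth in $\zeta$, and a Taylor expansion yields
\[
|\zeta+\eta|^{s}\;=\;\sum_{j=0}^{2l-2}\binom{s}{j}|\eta|^{s-j}(\operatorname{sign}\eta)^{j}\zeta^{j}\;+\;R_{2l-1}(\zeta,\eta),
\]
with $|\partial_{\zeta}^{\alpha}\partial_{\eta}^{\beta}R_{2l-1}(\zeta,\eta)|\lesssim|\zeta|^{2l-1-\alpha}|\eta|^{s-(2l-1)-\beta}$ in this region. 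The elementary identity $|\eta|^{s-j}(\operatorname{sign}\eta)^{j}\eta^{2l-1}=|\eta|^{s}\eta^{2l-1-j}$ shows that the $j$-th Taylor term is precisely the $j$-th subtracted term, so on the low-high piece only the remainder survives. The resulting symbol $i^{2l-1}\eta^{2l-1}R_{2l-1}$ is of size $|\zeta|^{2l-1}|\eta|^{s}$, and the associated bilinear operator, via a Coifman-Meyer / paraproduct argument, is bounded from $L^{\infty}\times L^{2}$ into $L^{2}$, giving the $\|\partial_{x}^{2l-1}f\|_{L^{\infty}}\|D^{s}g\|_{L^{2}}$ contribution.

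In the high-low and comparable regions, $|\zeta+\eta|\lesssim|\zeta|$ and $|\eta|\lesssim|\zeta|$, so each term of $m$ can be estimated pointwise with the $s$ derivatives shifted onto the high-frequency factor $f$ and the $2l-1$ derivatives falling on $g$; the Kato-Ponce inequality \eqref{Kato-Ponce2} applied dyadically then yields the $\|\partial_{x}^{2l-1}g\|_{L^{\infty}}\|D^{s}f\|_{L^{2}}$ bound. The periodic estimate \eqref{CommEstimates.1T} follows from the same Fourier-side argument with integrals replaced by sums; the only new feature is that the Littlewood-Paley projection to the zero mode cannot be controlled by an $L^{\infty}$ norm of a derivative (constants have zero derivative but nonzero supremum), which forces the additional $\sum_{j=0}^{2l-1}\|\partial_{x}^{j}f\|_{L^{\infty}}$ and $\sum_{j=0}^{2l-1}\|\partial_{x}^{j}g\|_{L^{\infty}}$ terms on the right-hand side, exactly as in Lemma \ref{Kato-PonceT}.

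The principal technical obstacle is verifying that the bilinear multiplier associated to the low-high remainder $R_{2l-1}$ is $L^{\infty}\times L^{2}\to L^{2}$ bounded: although $|\zeta+\eta|^{s}$ is singular along $\zeta+\eta=0$ for noninteger $s$, the restriction $|\zeta|\le|\eta|/4$ keeps the symbol a safe distance from this singularity, so one can smoothly truncate and check by direct differentiation that the Coifman-Meyer symbol estimates for $R_{2l-1}$ hold. Once these symbol bounds are established, the required $L^{\infty}\times L^{2}\to L^{2}$ estimate is standard paraproduct theory, and the proof is complete.
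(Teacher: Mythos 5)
Your proposal is correct and is essentially the paper's own argument: the paper proves \eqref{CommEstimates.1} by invoking the Coifman--Meyer theorem exactly as in Lemma 2.3 of \cite{Kw}, which is precisely the paraproduct decomposition and low-high Taylor expansion you describe, and deduces \eqref{CommEstimates.1T} from the line case as in Lemma 9.A.1 of \cite{IoKe}, with the same zero-mode phenomenon forcing the extra $L^{\infty}$ norms. Your sketch is a faithful expansion of the proof the paper omits.
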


\begin{proof} The proof of estimate \eqref{CommEstimates.1} is an application of the Coifman-Meyer theorem on bilinear Fourier multipliers \cite{CM}. Since it is identical to the proof of Lemma 2.3 in \cite{Kw}, we will omit it.

The proof of estimate \eqref{CommEstimates.1T} is deduced from estimate \eqref{CommEstimates.1} arguing exactly as in the proof of Lemma 9.A.1 in \cite{IoKe}.
\end{proof} 

Finally, we will also need the following commutator estimate involving the high frequencies projection operator $P_{high}$.
\begin{lemma} \label{CommutPhigh}
Let $m \in \mathbb N \cap [1,+\infty)$ and let $P_{high}$ be the operator of projection in high frequencies defined in the introduction. Then, 
\begin{equation} \label{CommutPhigh1}
\big\| [P_{high},f]\partial_x^mg\|_{L^2} \lesssim 
\sum_{j=0}^m\|\partial_x^jf\|_{L^{\infty}}\|g\|_{L^2} \, .
\end{equation}	
\end{lemma}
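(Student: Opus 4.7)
The plan is to reduce to a commutator with the low-frequency projector $P_{low}$ and then exploit the smoothing nature of $P_{low}$. Writing $P_{high} = I - P_{low}$ gives $[P_{high},f] = -[P_{low},f]$, so
$$[P_{high},f]\partial_x^m g \;=\; f\,P_{low}(\partial_x^m g) \;-\; P_{low}(f\,\partial_x^m g),$$
and I would estimate the two pieces separately.

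For the first piece, I would observe that $P_{low}\partial_x^m$ has Fourier multiplier $\eta(\xi)(i\xi)^m$, which is bounded and compactly supported; by Plancherel this operator is bounded on $L^2$. A straightforward $L^\infty \cdot L^2$ splitting then yields
$$\|f\,P_{low}(\partial_x^m g)\|_{L^2} \;\le\; \|f\|_{L^\infty}\,\|P_{low}\partial_x^m g\|_{L^2} \;\lesssim\; \|f\|_{L^\infty}\|g\|_{L^2},$$
which accounts for the $j=0$ term on the right-hand side of \eqref{CommutPhigh1}.

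For the second piece, I would realize $P_{low}$ on $\mathbb R$ as convolution with the Schwartz kernel $h = \mathcal{F}^{-1}\eta$, so that $P_{low}(f\partial_x^m g)(x) = \int h(x-y)\,f(y)\,\partial_y^m g(y)\,dy$. I would then integrate by parts $m$ times (assuming $f,g$ are Schwartz by density; the Schwartz decay of $h$ kills all boundary terms) and distribute the derivatives between $h(x-y)$ and $f(y)$ via Leibniz. This produces a finite sum of terms of the form $c_{m,j}\,h^{(j)}*\bigl((\partial_x^{m-j}f)\,g\bigr)$ for $j=0,\dots,m$. Since each $h^{(j)}$ lies in $L^1$, Young's inequality gives $\|h^{(j)}*\bigl((\partial_x^{m-j}f)\,g\bigr)\|_{L^2} \lesssim \|\partial_x^{m-j}f\|_{L^\infty}\|g\|_{L^2}$, and summing yields the desired bound.

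The torus case is even simpler: $P_{low}\varphi = \widehat{\varphi}(0)$ is the mean, so $P_{low}(\partial_x^m g) = 0$ for $m\ge 1$, disposing of the first piece, while $P_{low}(f\partial_x^m g)$ is the constant $\tfrac{1}{2\pi}\int f\,\partial_x^m g\,dx = \tfrac{(-1)^m}{2\pi}\int (\partial_x^m f)\,g\,dx$, trivially bounded by $\|\partial_x^m f\|_{L^\infty}\|g\|_{L^2}$. There is no genuine obstacle in this argument; the only mildly delicate point is verifying that the boundary terms in the integration by parts on $\mathbb R$ vanish, which follows from the Schwartz decay of $h$ together with a standard density argument.
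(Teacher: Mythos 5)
Your proposal is correct and follows essentially the same route as the paper: both reduce $[P_{high},f]=-[P_{low},f]$, realize $P_{low}$ as convolution with the Schwartz kernel $\mathcal{F}^{-1}\eta$, integrate by parts $m$ times with Leibniz to shift derivatives onto the kernel and onto $f$, and conclude by Young's inequality. The only cosmetic difference is that you bound the term $fP_{low}\partial_x^mg$ by $L^2$-boundedness of the multiplier $\eta(\xi)(i\xi)^m$ rather than by Young applied to $f(\partial_x^m\gamma)\ast g$, which is an equivalent step.
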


\begin{proof} We consider the case $M=\mathbb R$. The case $M=\mathbb T$ follows by similar arguments.
Observe by using \eqref{Plow} and integrating by parts that 
\begin{displaymath}
\begin{split}
[P_{high},f]\partial_x^mg&=-P_{low}(f\partial_x^mg)+fP_{low}\partial_x^mg
\\ &=-\gamma\ast(f\partial_x^mg)+f\big(\gamma\ast\partial_x^mg\big)
\\ &=-\sum_{j=0}^m\begin{pmatrix} m \\ j \end{pmatrix}(-1)^{m-j}(\partial_x^{m-j}\gamma )\ast(\partial_x^jfg)+
(-1)^mf(\partial_x^{m}\gamma )\ast g \, ,
\end{split}
\end{displaymath}
where $\gamma=(\eta)^{\vee} \in \mathcal{S}(\mathbb R) \subseteq L^1(\mathbb R)$. Hence estimates \eqref{CommutPhigh1} follows from Young's inequality on convolution.
\end{proof}

\section{Energy estimates} \label{EnergyEst}
in order to simplify the exposition, we will only work with the equation \eqref{homodel}, which is a particular case of \eqref{hodisp}. Note however, as explained in the introduction, that the nonlinear term $u\partial_x^{2l-1}u$ is the most difficult to treat among all the term appearing in the nonlinear term \eqref{nonlinearity}. Moreover, note that for the local theory at this level of regularity, the nonlinearity does not need to be in divergence form.

Finally, we will write the proofs in the case where $M=\mathbb R$. The proofs in the case where $M=\mathbb T$ follow similarly by using \eqref{KPT.1}, \eqref{KPT.100} and \eqref{CommEstimates.1T} instead of \eqref{Kato-Ponce1}, \eqref{Kato-Ponce2} and \eqref{CommEstimates.1T}.

\subsection{Energy estimate for the solution of \eqref{homodel}}
\begin{proposition} \label{EE}
Let $l \in \mathbb N$, $l \ge 2$, $s>s_l=4l-\frac92$ and $u \in C([0,T] : H^s(M))$ be a solution of \eqref{homodel}, with $M=\mathbb R$ or $\mathbb T$. 
Then, we can construct a \emph{modified energy} $E^s(u)$ of the form 
\begin{equation} \label{EE.1b}
E^s(u)(t)=\frac12 \|u(t)\|_{H^s}^2+\sum_{k=3}^{l+1} T_k^s(u)(t) \, ,
\end{equation}
where $T_k^s(u)$ is a term of order $k$ in $u$ and its derivatives, in such a way that the following properties hold true. 

\medskip
(1) \underline{Coercivity}. There exists a positive constant $\delta$ such that  
\begin{equation} \label{EE.4}
\frac14\|u(t)\|_{H^s}^2 \le E^s(u)(t) \le \frac34\|u(t)\|_{H^s}^2 \, ,
\end{equation}
for all $t \in [0,T]$ and for any $u \in C([0,T] : H^s(X))$ such that $\|u\|_{L^{\infty}_TH^s_x}<\delta$.

\medskip
(2) \underline{Energy estimate}.
\begin{equation} \label{EE.5}
\Big|\frac{d}{dt}E^s(u)(t)\Big|  \lesssim \left(\sum_{k=1}^{l-1}\|u(t)\|_{H^s}^k\right)\|u(t)\|_{H^s}^2 \, ,
\end{equation}
for all $t \in (0,T)$ and for any $u \in C([0,T] : H^s(X))$.
\end{proposition}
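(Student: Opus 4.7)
The plan is to build up the modified energy $E^s$ by successive corrections in the order $T_3^s\to T_4^s\to\cdots\to T_{l+1}^s$, as sketched in the introduction. First I would compute $\frac12\frac{d}{dt}\|u\|_{H^s}^2$ by applying $J^s$ to \eqref{homodel}, pairing with $J^su$ in $L^2$, and integrating. The linear contribution $\int J^s\partial_x^{2l+1}u\cdot J^su\,dx$ vanishes by a single integration by parts (odd order), so everything rests on $\int J^s(u\partial_x^{2l-1}u)\cdot J^su\,dx$. Expanding via Lemma \ref{CommEstimates} splits this into a ``paired'' principal part $\sum_{j=0}^{2l-2}\binom{s}{j}\int\partial_x^ju\,D^s\partial_x^{2l-1-j}u\,D^su\,dx$ plus a remainder controlled by $\|\partial_x^{2l-1}u\|_{L^\infty}\|u\|_{H^s}^2$, already of acceptable size since $s>4l-\tfrac92>2l-\tfrac12$. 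The discrepancy between $J^s$ and $D^s$ at low frequencies would be absorbed using the projectors $P_{low}$ and $P_{high}$, the low-frequency piece being harmless by Bernstein-type bounds.

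Next, I would symmetrize the paired terms using Lemma \ref{technical}: grouping the $j$-th and $(2l-1-j)$-th contribution and recognizing the resulting expression as an instance of $I_{2l+1}$, each pair can be rewritten as a sum $\sum_{j=1}^{l-1}\beta_j\int\partial_x^{2(l-j)-1}u\,(D^s\partial_x^ju)^2\,dx$, modulo residues absorbed into $\|\partial_x^{l-1}u\|_{L^\infty}\|u\|_{H^s}^2$. These $l-1$ terms are the genuine obstructions to a naive energy estimate. To kill them I would introduce $T_3^s(u)=\sum_{j=0}^{l-2}\gamma_jT_{3,j}^s(u)$ with the $T_{3,j}^s$ of the introduction and compute $\frac{d}{dt}T_3^s(u)=X_{3\to3}^s(u)+X_{3\to4}^s(u)$, corresponding to substituting $\partial_tu=-\partial_x^{2l+1}u+u\partial_x^{2l-1}u$ into each of the three factors. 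The cubic piece $X_{3\to3}^s(u)$ is brought to the same normal form via a second application of Lemma \ref{technical} (the three $\partial_x^{2l+1}$-placements assembling into an $I_{2l+1}$), and matching coefficients yields a lower-triangular linear system for $\gamma_0,\ldots,\gamma_{l-2}$ whose diagonal entries are all equal to $\alpha_{l,l}=(-1)^{l+1}(2l+1)\ne 0$ by \eqref{technical.2}; the system is therefore uniquely solvable.

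With the cubic cancellation in hand, the quartic remainder $X_{3\to 4}^s(u)$ is treated next. For $l=2$ it is already estimable by \eqref{Kato-Ponce2}. For $l\ge 3$, its worst terms mirror the cubic obstructions but are $4$-linear with one fewer ``free'' derivative on the top factor, which motivates a quartic correction $T_4^s(u)$ built from analogues of the $T_{3,j}^s$. The plan is then to iterate: at each stage $k=3,\ldots,l$, the $k$-linear correction $T_k^s$ is chosen so that its $X_{k\to k}^s$ contribution cancels the bad $k$-linear terms left over from stage $k-1$, at the cost of generating $(k+1)$-linear terms $X_{k\to k+1}^s$ that are one step closer to being directly estimable. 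After $l-1$ iterations the surviving uncontrolled term is at most $(l+2)$-linear with derivative counts on each factor admissible for direct estimation by H\"older and Sobolev. The estimate \eqref{EE.5} then follows once every norm $\|\partial_x^{2l-1+2j}u\|_{L^\infty}$, $0\le j\le l-2$, is bounded by $\|u\|_{H^s}$, and this Sobolev step is what forces the threshold $s>4l-\tfrac92$.

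Coercivity \eqref{EE.4} is immediate once the $T_k^s$ are constructed: each is an integral of a product of $k$ factors with total derivative count $2s-2(k-2)$, so placing the two highest-derivative factors in $L^2$ and the remaining $k-2$ in $L^\infty$ via Sobolev (legal at the same threshold) gives $|T_k^s(u)|\lesssim\|u\|_{H^s}^k$, and taking $\delta$ sufficiently small absorbs the corrections into the quadratic term $\frac12\|u\|_{H^s}^2$. The principal obstacle I anticipate is combinatorial rather than analytic: at each iteration one must specify the correct ansatz for $T_k^s$, verify that the linear system determining its coefficients is still non-degenerate (the higher-order analogues of the non-vanishing of $\alpha_{l,l}$ must be checked), and show that the ``badness'' of the surviving term strictly decreases at each step so that the recursion terminates at $k=l+1$ rather than running indefinitely. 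Given the clean triangular structure already visible at the cubic stage, I expect the induction to carry through at each subsequent order.
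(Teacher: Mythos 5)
Your proposal follows essentially the same route as the paper: expand the nonlinear contribution via the generalized Kato--Ponce commutator estimate, symmetrize the paired terms with Lemma \ref{technical} to isolate the $l-1$ obstructions $\int\partial_x^{2(l-j)-1}u\,(D^s\partial_x^ju)^2$, cancel them with the cubic correction $T_3^s$ whose coefficients solve a triangular system with diagonal $\alpha_{l,l}=(-1)^{l+1}(2l+1)\neq 0$, and iterate through $T_4^s,\dots,T_{l+1}^s$, with coercivity following from $|T_k^s(u)|\lesssim\|u\|_{H^s}^k$ and smallness of $\delta$. The only cosmetic difference is that you start from $J^s$ and reconcile with $D^s$ via frequency projectors, whereas the paper estimates the $L^2$ and $D^s$ pieces of the $H^s$ norm separately from the outset.
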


\begin{proof} Let $u$ be a smooth solution to \eqref{homodel} defined on the time interval $[0,T]$. Following Kwon in \cite{Kw} for the case $l=2$ in $\mathbb R$, we define a modified energy
\begin{equation} \label{EE.3}
E^s(u)(t)=\frac12\|u(t)\|_{H^s}^2+\sum_{k=3}^{l+1}T_k^s(u)(t) \, ,
\end{equation}
where $T_k^s(u)(t)=\int p_k^s(u)(x,t)dx$ and $p_k^s(u)$ is a homogeneous polynomial of degree $k$ in $u$ and its derivatives. We will construct $E^s(u)$ in such a way that \eqref{EE.4} holds true
if $\|u\|_{L^{\infty}H^s}<\delta$ for some small positive number $\delta$, and such that the  energy estimate 
\eqref{EE.5} holds true for all $0 \le t \le T$. 	
	
\medskip	
Now, we explain how to construct the modified energy $E^s(u)$. We first treat the quadratic terms in \eqref{EE.3}. We multiply \eqref{homodel} by $u$ and integrate in space to get that
\begin{equation} \label{EE.6}
\frac12\frac{d}{dt}\|u\|_{L^2}^2 =\int u^2\partial_x^{2l-1}u \le \|\partial_x^{2l-1}u\|_{L^{\infty}}\|u\|_{L^2}^2\, . 
\end{equation}
Next, we apply $D^s$ to \eqref{homodel}, mutiply by $D^su$, integrate in space and use the commutator estimate \eqref{CommEstimates.1} to deduce that
\begin{equation} \label{EE.7}
\begin{split}
\frac12\frac{d}{dt}\|D^su\|_{L^2}^2&=\int D^s(u\partial_x^{2l-1}u)D^su \\ 
&=\mathcal{O}\big(\|\partial_x^{2l-1}u\|_{L^{\infty}}\|D^su\|_{L^2}^2\big)
+\sum_{j=0}^{2l-2}\begin{pmatrix} s \\ j \end{pmatrix}\int \partial_x^juD^s\partial_x^{2l-1-j}uD^su \, .
\end{split}
\end{equation}
To handle the term $\int \partial_x^juD^s\partial_x^{2l-1-j}uD^su$ appearing on the right-hand side of \eqref{EE.7}, we first consider the case where $j$ is even. Let us denote $j=2j'$ with $0\le j' \le l-1$.  By using the notation in \eqref{I.1}, we have
\begin{equation} \label{EE.8}
\int \partial_x^juD^s\partial_x^{2l-j-1}uD^su=\frac12I_{2l-j-1}(\partial_x^ju,D^su,D^su)-\frac12\int\partial_x^{2l-1}u(D^su)^2 \, .
\end{equation}
When, $j=2l-2$, we have that $I_{1}(\partial_x^{2l-2}u,D^su,D^su)=0$. When $0 \le j'\le l-2$, it follows from Lemma \ref{technical} that
\begin{equation} \label{EE.9}
I_{2l-j-1}(\partial_x^ju,D^su,D^su)=\sum_{k=1}^{l-j'-1}\alpha_{k,l-j'-1}\int\partial_x^{2(l-k)-1}u(D^s\partial_x^ku)^2 \, .
\end{equation}
In the case where $j$ is odd, let us denote $j=2j'+1$ with $0 \le j'\le l-2$. We first integrate by parts to obtain 
\begin{equation} \label{EE.10}
\begin{split}
\int \partial_x^juD^s\partial_x^{2l-j-1}uD^su &=-\int \partial_x^{j+1}uD^s\partial_x^{2(l-2-j')+1}uD^su \\ & \quad -\int \partial_x^{j}uD^s\partial_x^{2(l-2-j')+1}uD^s\partial_xu \, .
\end{split}
\end{equation}
By using the notation in \eqref{I.1}, we have that
\begin{equation} \label{EE.11}
\int \partial_x^{j+1}uD^s\partial_x^{2(l-2-j')+1}uD^su=\frac12I_{2l-j-2}(\partial_x^{j+1}u,D^su,D^su)-\int \partial_x^{2l-1}u(D^su)^2
\end{equation}
and the first term on the right-hand side of \eqref{EE.11} can be handled by using Lemma \ref{technical} exactly as in \eqref{EE.9}. To deal with the second term on the right-hand side of \eqref{EE.10}, we have two different cases: or $2(l-2-j')+1=1$ and we are done, or we continue the same process a finite number of times. Finally, we conclude gathering \eqref{EE.6}--\eqref{EE.11} that 
\begin{equation} \label{EE.12}
\frac12\frac{d}{dt}\|u\|_{H^s}^2=\mathcal{O}(\|\partial_x^{l-1}u\|_{L^{\infty}}\|u\|_{H^s}^2)+\sum_{j=1}^{l-1}\beta_j\int \partial_x^{2(l-j)-1}u(D^s\partial_x^ju)^2 \, ,
\end{equation}
where $\beta_j$, $j=1\cdots l-1,$ are real numbers. 
	
The $l-1$ terms appearing in the sum on the right-hand side of \eqref{EE.12} cannot be handled directly. Therefore, we need to add $l-1$ correcting cubic terms to the energy in order to cancel them out. We then define the term $T^s_3(u)$ appearing in \eqref{EE.3} by 
\begin{equation} \label{EE.13}
T^s_3(u)=\sum_{j=0}^{l-2}\gamma_{j}T_{3,j}^s(u) \, ,
\end{equation}
where 
\begin{equation} \label{EE.14}
T_{3,j}^s(u)=\left\{\begin{array}{ll}
\int\partial_{x}^{2j}u(D^{s-2-j}\partial_xu)^2 & \text{if} \ j \ \text{is even} \\ 
\int\partial_{x}^{2j}u(D^{s-1-j}u)^2 & \text{if} \ j \ \text{is odd} 
\end{array} \right.  , \ \text{for} \ 0 \le j \le l-2 ,
\end{equation} 
and $\gamma_{j}$, $0 \le j \le l-2$, are real coefficients to be determined. Note that $D^{s-2-j}$ always makes sense, since $s-2-j \ge 0$ thanks to the hypothesis $s>4l-\frac92$. 
	
We deal for example with the case where $j$ is even. By using the equation \eqref{homodel} and the notation in \eqref{I.1}, we have that
\begin{equation} \label{EE.15}
\frac{d}{dt}T_{3,j}^s(u)=X^s_{3 \to 3,j}(u)+X^s_{3 \to 4,j}(u)
\end{equation}
where $X^s_{3 \to 3,j}(u)$ denotes the cubic terms resulting from $\frac{d}{dt}T_{3,j}^s(u)$ and is defined by
\begin{equation} \label{EE.16}
X^s_{3 \to 3,j}(u)=-I_{2l+1}(\partial_x^{2j}u,D^{s-2-j}\partial_xu,D^{s-2-j}\partial_xu) \, ,
\end{equation}
and $X^s_{3 \to 4,j}(u)$ denotes the fourth-order terms resulting from $\frac{d}{dt}T_{3,j}(u)$ and is defined by
\begin{equation} \label{EE.17}
\begin{split}
X^s_{3 \to 4,j}(u)&=\int \partial_x^{2j}(u\partial_x^{2l-1}u)(D^{s-2-j}\partial_xu)^2\\ & \quad +2\int\partial_x^{2j}uD^{s-2-j}\partial_x(u\partial_x^{2l-1}u)D^{s-2-j}\partial_xu \, .
\end{split}
\end{equation}
We will also denote 
\begin{displaymath}
X^s_{3 \to 3}(u)=\sum_{j=0}^{l-2}\gamma_jX^s_{3 \to 3,j}(u) \quad \text{and} \quad X^s_{3 \to 4}(u)=\sum_{j=0}^{l-2}\gamma_jX^s_{3 \to 4,j}(u) \, .
\end{displaymath}
	
Next, we focus on the cubic terms. It follows from Lemma \ref{technical} that
\begin{equation} \label{EE.18}
X^s_{3 \to 3,j}(u)=-\sum_{k=1}^l\alpha_{k,l}\int\partial_x^{2(l+j-k)+1}u(D^{s-2-j}\partial_x^{1+k}u)^2 \, .
\end{equation}
Since $j$ is even, $j=2j'$, we have $D^{s-2-j}\partial_x^{1+k}u=(-1)^{1+j'}D^s\partial_x^{1+k-(2+j)}u$, so that 
\begin{equation} \label{EE.19}
\begin{split}
X^s_{3 \to 3,j}(u)&=\mathcal{O}(\sum_{k=1}^{j+1}\|\partial_x^{2(l+j-k)+1}u\|_{L^{\infty}}\|u\|_{H^s}^2)\\ & \quad-\sum_{k=1}^{l-(j+1)}\alpha_{k+(j+1),l}\int \partial_x^{2(l-k)-1}u(D^s\partial_x^{k}u)^2 \, .
\end{split}
\end{equation}
We argue the same way when $j$ is odd. Therefore, we deduce from \eqref{EE.13} and \eqref{EE.19} that 
\begin{equation} \label{EE.20}
\begin{split}
X^s_{3 \to 3}(u)&=\mathcal{O}(\sum_{j=0}^{l-2}\sum_{k=1}^{j+1}\|\partial_x^{2(l+j-k)+1}u\|_{L^{\infty}}\|u\|_{H^s}^2)\\ & \quad-\sum_{j=0}^{l-2}\gamma_{j}\sum_{k=1}^{l-(j+1)}\alpha_{k+(j+1),l}\int \partial_x^{2(l-k)-1}u(D^s\partial_x^{k}u)^2 \, .
\end{split}
\end{equation}
We observe from \eqref{EE.12} and \eqref{EE.20} that we can always choose the coefficients $\{\gamma_j\}_{j=0}^{l-2}$ such that 
\begin{equation} \label{EE.21}
\left| \frac12\frac{d}{dt}\|u\|_{H^s}^2+X^s_{3 \to 3}(u) \right| \lesssim M_l(u)\|u\|_{H^s}^2 \, ,
\end{equation}
where 
\begin{equation} \label{Ml}
M_l(u)=\sum_{j=0}^{l-2}\|\partial_x^{2l-1+2j}u\|_{L^{\infty}} \lesssim \|u\|_{H^s}\, 
\end{equation} 
for $s>4l-\frac92$ by the Sobolev embedding.
Indeed, to choose $\{\gamma_{j}\}_{j=0}^{l-2}$ in order to cancel the terms $\beta_j\int \partial_x^{2(l-j)-1}u(D^s\partial_x^{j}u)^2$ for $k=1, \cdots, l-1$, we need to solve a nonhomogeneous $(l-1)\times (l-1)$ linear system. The corresponding linear matrix is triangular with all the coefficients in the diagonal equal to $\alpha_{l,l}$. By using \eqref{technical.2}, we have that $\alpha_{l,l}=(-1)^{l+1}(2l+1) \neq 0$, so that the matrix is invertible.

\medskip
	
Now, we look at the fourth order term  $X^s_{3 \to 4}(u)=\sum_{j=0}^{l-2}\gamma_{j}X^s_{3 \to 4,j}(u)$ where $X^s_{3 \to 4,j}(u)$ is defined in \eqref{EE.17}.  If we are in the case $l=2$, then $X^s_{3 \to 4}(u)$ is easily estimated by using the Kato-Ponce commutator estimate \eqref{Kato-Ponce3}. We briefly explain how to proceed in the case where $l \ge 3$. The first term on the right-hand side of \eqref{EE.17} is clearly bounded by $\|u\|_{H^s}^4$ by using the Sobolev embedding. In the case $j=0$, which is the most difficult, the second one can be rewritten after some integrations by parts as
\begin{equation} \label{EE.22} 
\begin{split}
\int u&D^{s-2}\partial_x(u\partial_x^{2l-1}u)D^{s-2}\partial_xu \\ &= -\int uD^s\big(u\partial_x^{2l-3}u\big)D^su+\int \partial_xuD^s\big(u\partial_x^{2l-3}u\big)D^{s-2}\partial_xu\\ & \quad
-2\int uD^{s-2}\big(\partial_xu\partial_x^{2l-2}u\big)D^su+2\int \partial_xuD^{s-2}\big(\partial_xu\partial_x^{2l-2}u\big)D^{s-2}\partial_xu \\ &
\quad -\int uD^{s-2}\big(\partial_x^2u\partial_x^{2l-3}u\big)D^su+
\int \partial_xuD^{s-2}\big(\partial_x^2u\partial_x^{2l-3}u\big)D^{s-2}\partial_xu \, .
\end{split}
\end{equation}
We explain for example how to handle the first term appearing on the right-hand side of \eqref{EE.22}.
By using \eqref{CommEstimates.1}, we have 
\begin{equation} \label{EE.220}
\begin{split}
\int uD^s\big(u\partial_x^{2l-3}u\big)D^su&=\mathcal{O}(\|u\|_{L^{\infty}}\|\partial_x^{2l-3}u\|_{L^{\infty}}\|u\|_{H^s}^2)\\ & \quad+\sum_{k=0}^{2l-4}\begin{pmatrix} s \\ k \end{pmatrix}\int u\partial_x^kuD^s\partial_x^{2l-3-k}uD^su \, .
\end{split}
\end{equation}
Hence, we obtain arguing as in \eqref{EE.7}--\eqref{EE.12} that 
\begin{equation} \label{EE.23}
\begin{split}
\int uD^s\big(u&\partial_x^{2l-3}u\big)D^su \\&=\mathcal{O}(\|u\|_{H^s}^4)+\sum_{k=0}^{2l-5}\sum_{j=1}^{j_k}\beta_{k,j}\int \partial_x^{2(l-j)-k-3}(u\partial_x^ku)(D^s\partial_x^ju)^2 \, ,
\end{split}
\end{equation}
where $j_k=l-2-[k/2]$, $[k/2]$ denotes the integer part of $k/2$, and $\beta_{k,j}$ are real coefficients for $k=0,\cdots,2l-5$, $ \, j=0,\cdots,j_k$. Thus, for each $k=0,\cdots,2l-5$, we need to add at most  $l-2$ fourth-order terms to the energy in order to cancel out the corresponding sum in $j$ appearing on the right-hand side of \eqref{EE.23}. Of course, we can do the same thing to deal with the other terms appearing on the right-hand side of \eqref{EE.22}. This will define the fourth-order term $T_4^s(u)$. Then, arguing as in \eqref{EE.15}--\eqref{EE.17}, it follows that
\begin{equation} \label{EE.24}
\frac{d}{dt}T_{4}(u)=X^s_{4 \to 4}(u)+X^s_{4 \to 5}(u) \, ,
\end{equation}
where $X^s_{4 \to 4}(u)$ is the fourth-order contribution which will cancel out the problematic terms in  $X^s_{3 \to 4}(u)$ and $X^s_{4 \to 5}(u)$ is the fifth-order contribution. If $l=3$, $X^s_{4 \to 5}(u)$ can be estimated directly by using the Kato-Ponce commutator estimate. If $l \ge 4$, we need to add a fifth-order term to the energy in order to cancel out the problematic terms appearing in $X^s_{4 \to 5}(u)$. 
	
This process will finish after a finite number of modifications to the energy (exactly $l-1$). This yields estimate \eqref{EE.5}, which concludes the proof of Proposition \ref{EE}.

\end{proof}

\begin{remark}
We would like to point out that the method described above is in spirit very similar to the $I$-method (see \cite{CKSTT1,CKSTT},etc).
\end{remark}

\subsection{Energy estimates for the differences} In this subsection, we derive  energy estimates for the difference of two solutions $u_1$ and $u_2$ of \eqref{homodel}. 
\medskip

\begin{proposition} \label{diffEE}
Let $l \in \mathbb N$, $l \ge 2$, $s>s_l=4l-\frac92$ and $u_1, \, u_2 \in C([0,T] : H^s(M))$ be two solutions of \eqref{homodel}, with $M=\mathbb R$ or $\mathbb T$.  We denote $v=u_1-u_2$ the difference between the two solutions, so that $v$ solves 
\begin{equation} \label{diffeq}
\partial_tv+\partial_x^{2l+1}v=v\partial_x^{2l-1}u_1+u_2\partial_x^{2l-1}v \, .
\end{equation}
Then, for $\sigma=0$ or $\sigma=s$, we can construct a \emph{modified energy} $\widetilde{E}^{\sigma}(v)$ of the form 
\begin{equation} \label{diffEE.2}
\widetilde{E}^{\sigma}(v)(t)=\frac12 \|v_{low}(t)\|_{L^2}^2+\frac12 \|D^{\sigma}v_{high}(t)\|_{L^2}^2+\sum_{k=3}^{l+1} \widetilde{T}_k^{\sigma}(u_2,v)(t) \, ,
\end{equation}
where $\widetilde{T}_k^{\sigma}(u_2,v)$ is a term of order $k$ in $u_2$, $v$ and their derivatives, in such a way that the following properties hold true.

\medskip
(1) \underline{Coercivity}. There exists a positive constant $\delta$ such that  
\begin{equation} \label{diffEE.3}
\frac14\|v(t)\|_{H^{\sigma}}^2 \le \widetilde{E}^{\sigma}(v)(t) \le \frac34\|v(t)\|_{H^{\sigma}}^2 \, ,
\end{equation}
for all $t \in [0,T]$ if $\|u_2\|_{L^{\infty}_TH^s_x}<\delta$.

\medskip
(2) \underline{$L^2$-Energy estimate}.
\begin{equation} \label{diffEE.4}
\Big|\frac{d}{dt}\widetilde{E}^0(v)(t)\Big|  \lesssim \left(\sum_{k=1}^{l-1}\Big(\|u_1(t)\|_{H^s}+\|u_2(t)\|_{H^s}\Big)^k\right)\|v(t)\|_{L^2}^2 ,
\end{equation}
for all $t \in (0,T)$ .

\medskip
(3) \underline{$H^s$-Energy estimate}.
\begin{equation} \label{diffEE.5}
\begin{split}
\Big|\frac{d}{dt}\widetilde{E}^s(v)(t) \Big| &\lesssim\left(\sum_{k=1}^{l-1}\Big(\|u_1(t)\|_{H^s}+\|u_2(t)\|_{H^s}\Big)^k\right)\|v(t)\|_{H^s}^2
\\ & \quad+ \left(\sum_{k=1}^{l-1}\|u_2(t)\|_{H^s}^{k-1}\|u_1(t)\|_{H^{s+2l-k}}\right)\|v(t)\|_{L^{\infty}}\|J^sv(t)\|_{L^2} ,
\end{split}
\end{equation}
for all $t \in (0,T)$ .
\end{proposition}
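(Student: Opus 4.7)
The plan is to mirror the construction of Proposition \ref{EE} applied to the difference equation \eqref{diffeq}, with the understanding that the principal quadratic term $u_2\partial_x^{2l-1}v$ plays the role that $u\partial_x^{2l-1}u$ did in \eqref{homodel}, while the extra term $v\partial_x^{2l-1}u_1$ is bounded directly and is the source of the second sum in \eqref{diffEE.5}. Accordingly, I build the correctors $\widetilde T_{k}^{\sigma}(u_2,v)$ by taking each monomial $p_k^\sigma(u)$ appearing in the proof of Proposition \ref{EE} and replacing exactly two of the $u$-factors by $v$, leaving the remaining $k-2$ factors as $u_2$; in particular $\widetilde T_{3,j}^\sigma(u_2,v)$ is the natural analogue of \eqref{EE.14} with the outside $\partial_x^{2j}$-factor being $u_2$ and the two symmetric Riesz-derivative factors being $v$.

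For the $L^2$ estimate ($\sigma=0$) I would first write $\|v\|_{L^2}^2\sim\|v_{low}\|_{L^2}^2+\|v_{high}\|_{L^2}^2$; the low-frequency part is harmless by Bernstein. Pairing $v$ with \eqref{diffeq} and integrating, the contribution $2\int v^2\partial_x^{2l-1}u_1$ is controlled by $\|\partial_x^{2l-1}u_1\|_{L^\infty}\|v\|_{L^2}^2$, and the delicate term $2\int u_2 v \partial_x^{2l-1}v$ is rewritten via $I_{2l-1}(u_2,v,v)$ and Lemma \ref{technical} as $\sum_{j=1}^{l-1}\widetilde\beta_j\int \partial_x^{2(l-j)-1}u_2(\partial_x^j v)^2$ modulo a boundary-type $\int\partial_x^{2l-1}u_2\, v^2$ term. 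These are killed by introducing $\widetilde T_3^0(u_2,v)=\sum_{j=0}^{l-2}\widetilde\gamma_j\widetilde T_{3,j}^0(u_2,v)$ and solving the same triangular $(l-1)\times(l-1)$ system as in \eqref{EE.19}--\eqref{EE.21}, which is invertible by \eqref{technical.2}. Differentiating $\widetilde T_3^0$ in time and substituting the equations for $u_2$ and $v$ produces a fourth-order remainder; the piece from $\partial_t v=-\partial_x^{2l+1}v+v\partial_x^{2l-1}u_1+u_2\partial_x^{2l-1}v$ where $\partial_x^{2l+1}v$ is substituted is what cancels the cubic obstruction, while the nonlinear substitutions generate terms of total $v$-degree two that feed the next iteration. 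Lemma \ref{CommutPhigh} is invoked to commute $P_{high}$ past multipliers in $u_2$ whenever necessary. After $l-1$ such iterations, ending with $\widetilde T_{l+1}^0(u_2,v)$, the Kato--Ponce estimate \eqref{Kato-Ponce2} closes the final residue, yielding \eqref{diffEE.4}.

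For the $H^s$ estimate I apply $D^s$ to \eqref{diffeq} and pair with $D^s v_{high}$. The term $D^s(u_2\partial_x^{2l-1}v)$ is treated by \eqref{CommEstimates.1} exactly as in \eqref{EE.7}, the commutator remainder being controlled by $\|\partial_x^{2l-1}u_2\|_{L^\infty}\|D^s v\|_{L^2}^2$ and the explicit sum reducing via Lemma \ref{technical} to terms $\int \partial_x^{2(l-j)-1}u_2(D^s\partial_x^j v)^2$ that are canceled by $\widetilde T_{3,j}^s(u_2,v)$. The genuinely new ingredient is $D^s(v\partial_x^{2l-1}u_1)$: here the Leibniz expansion produces contributions $\int \partial_x^j v\, D^s\partial_x^{2l-1-j}u_1\, D^s v$ for $0\le j\le 2l-2$ which cannot be removed by integration by parts, but are estimated directly by $\|\partial_x^j v\|_{L^\infty}\|u_1\|_{H^{s+2l-1-j}}\|J^s v\|_{L^2}$. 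The worst case $j=0$ gives precisely the $k=1$ summand on the right-hand side of \eqref{diffEE.5}. Each subsequent cubic, quartic, \dots, $(l{+}1)$-st order modification $\widetilde T_k^s(u_2,v)$ adds one factor of $u_2$ when $\partial_t v$ is substituted, so iterated cross terms involving $\partial_t v\!\leftarrow\! v\partial_x^{2l-1}u_1$ produce contributions bounded by $\|u_2\|_{H^s}^{k-1}\|u_1\|_{H^{s+2l-k}}\|v\|_{L^\infty}\|J^s v\|_{L^2}$, accounting for the full sum in \eqref{diffEE.5}.

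Coercivity \eqref{diffEE.3} is immediate from the structure: each $\widetilde T_k^\sigma(u_2,v)$ is an $L^1$-pairing of a polynomial of degree $k-2$ in $u_2$-derivatives with a quadratic form in $v$ of total weight $2\sigma$, so Sobolev embedding (using $s>4l-9/2$) gives $|\widetilde T_k^\sigma(u_2,v)|\lesssim\|u_2\|_{H^s}^{k-2}\|v\|_{H^\sigma}^2$, which is absorbed into $\tfrac12\|v\|_{H^\sigma}^2$ once $\|u_2\|_{H^s}<\delta$. The principal obstacle is combinatorial: at each of the $l-1$ iterative steps one must verify that the triangular system defining the new coefficients remains invertible (as in \eqref{EE.21}--\eqref{Ml}), and that in the $H^s$-case all quartic and higher residues produced by cross-substitutions involving the $v\partial_x^{2l-1}u_1$ term distribute their derivatives so that no single factor of $u_1$ carries more than $s+2l-k$ derivatives at iteration level $k$, thereby fitting into the stated bound rather than blowing up.
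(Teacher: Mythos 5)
Your overall strategy coincides with the paper's: estimate $v_{low}$ directly, run the Proposition \ref{EE} machinery on the principal term $u_2\partial_x^{2l-1}v$ (commutator estimate \eqref{CommEstimates.1}, Lemma \ref{technical}, then an $(l-1)\times(l-1)$ triangular system of cubic correctors, iterated $l-1$ times), treat $v\partial_x^{2l-1}u_1$ as a directly bounded source term responsible for the derivative-loss sum in \eqref{diffEE.5}, and use Lemma \ref{CommutPhigh} to move $P_{high}$ around. Two points in your write-up, however, need repair before the argument closes as stated.

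First, for $\sigma=0$ your correctors are ill-defined and the coercivity claim fails as written: you take the two Riesz-derivative factors to be $v$, so that $\widetilde T^0_{3,j}$ contains $D^{-2-j}\partial_x v$, whose symbol $\sim i\,\mathrm{sgn}(\xi)|\xi|^{-1-j}$ is singular at $\xi=0$; in particular $\|D^{-2-j}\partial_x v\|_{L^2}$ is not controlled by $\|v\|_{L^2}$, so $|\widetilde T^0_{3,j}(u_2,v)|\lesssim\|u_2\|_{H^s}\|v\|_{L^2}^2$ is false for general $v\in L^2$. This is precisely why the energy \eqref{diffEE.2} separates $v_{low}$ from $v_{high}$ and why the paper builds every corrector out of $v_{high}$ (on whose frequency support $D^{\sigma-2-j}\partial_x$ is a bounded multiplier); the price is the commutator $[P_{high},u_2]\partial_x^{2l-1}v$, which is exactly what Lemma \ref{CommutPhigh} absorbs. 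You have all the ingredients in place but must carry the high-frequency projection into the correctors themselves. Second, in the $H^s$ estimate your treatment of $D^s(v\partial_x^{2l-1}u_1)$ by a full Leibniz expansion produces, for $1\le j\le 2l-2$, terms bounded by $\|\partial_x^j v\|_{L^\infty}\|u_1\|_{H^{s+2l-1-j}}\|J^sv\|_{L^2}$, which do not fit the right-hand side of \eqref{diffEE.5}: that bound only tolerates the factor $\|v\|_{L^\infty}$ (this matters later, since in the Bona--Smith step the smallness of $\|v\|_{L^\infty}$ in a \emph{non-differentiated} norm is what compensates the growth of $\|u_1\|_{H^{s+2l-k}}$). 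The paper avoids this by estimating $\|J^s(v\,\partial_x^{2l-1}u_1)\|_{L^2}$ with the product estimate \eqref{Kato-Ponce2}, which concentrates the entire derivative loss into the single admissible term $\|v\|_{L^\infty}\|\partial_x^{2l-1}J^su_1\|_{L^2}+\|\partial_x^{2l-1}u_1\|_{L^\infty}\|J^sv\|_{L^2}$; you should do the same (or re-interpolate your intermediate terms, which is more work and does not literally reproduce \eqref{diffEE.5}).
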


\begin{proof} We begin to estimate the low frequency part of $v$ in $L^2$.	
By \eqref{diffeq}, integration by parts and using the Leibniz rule, we deduce
\begin{displaymath} 
\begin{split}
\frac12\frac{d}{dt}\|v_{low}\|_{L^2}^2 &= \int P_{low}\big(v\partial_x^{2l-1}u_1 \big)v_{low}+\int P_{low}\big(u_2\partial_x^{2l-1}v \big)v_{low} \\ &
=\int P_{low}\big(v\partial_x^{2l-1}u_1 \big)v_{low}-\sum_{j=0}^{2l-1}
\begin{pmatrix} 2l-1 \\ j \end{pmatrix} \int v\partial_x^ju_2\partial_x^{2l-1-j}P_{low}^2v \, ,
\end{split}
\end{displaymath}	
so that 
\begin{equation} \label{diffEE.6}
\frac12\frac{d}{dt}\|v_{low}\|_{L^2}^2\lesssim\left(\|\partial_x^{2l-1}u_1\|_{L^{\infty}}+\sum_{j=0}^{2l-1}\|\partial_x^{j}u_2\|_{L^{\infty}} \right)\|v\|_{L^2}^2 \, .
\end{equation}

Now we turn to the high frequency part of $v$. We get from \eqref{diffeq} and \eqref{CommEstimates.1} that 
\begin{equation} \label{diffEE.7}
\begin{split}
\frac12&\frac{d}{dt}\|D^{\sigma}v_{high}\|_{L^2}^2\\ &=\int D^{\sigma}P_{high}(v\partial_x^{2l-1}u_1)D^{\sigma}v_{high}+\int D^{\sigma}P_{high}(u_2\partial_x^{2l-1}v)D^{\sigma}v_{high} \, .
\end{split}
\end{equation} 

First, we handle the right-hand side of \eqref{diffEE.7}, in the case $\sigma=0$. We get easily that 
\begin{equation} \label{diffEE.8}
\Big|\int P_{high}(v\partial_x^{2l-1}u_1)v_{high}\Big| \lesssim \|\partial_x^{2l-1}u_1\|_{L^{\infty}}\|v\|_{L^2}^2 \, .
\end{equation}
Moreover, 
\begin{displaymath} 
\begin{split}
\int P_{high}(u_2\partial_x^{2l-1}v)v_{high} &=\int u_2\partial_x^{2l-1}v_{high}\, v_{high}
+\int [P_{high},u_2]\partial_x^{2l-1}v \,v_{high}
\\&=\frac12I_{2l-1}(u_2,v_{high},v_{high})+\int [P_{high},u_2]\partial_x^{2l-1}v \,v_{high} \, ,
\end{split}
\end{displaymath}
so that it follows from \eqref{technical.1} and \eqref{CommutPhigh1} that
\begin{equation} \label{diffEE.9}
\begin{split}
\int P_{high}&(u_2\partial_x^{2l-1}v)v_{high}\\&=\mathcal{O}\big(\sum_{j=0}^{2l-1}\|\partial_x^ju_2\|_{L^{\infty}}\|v\|_{L^2}^2 \big)+\sum_{j=1}^{l-1}\widetilde{\beta}_j^0 \int\partial_x^{2(l-j)-1}u_2 (\partial_x^jv_{high})^2 \, ,
\end{split}
\end{equation}
where $\widetilde{\beta}_1^0,\cdots,\widetilde{\beta}_{l-1}^0$ are $l-1$ real numbers. Hence, we conclude from \eqref{diffEE.7}-\eqref{diffEE.9} that 
\begin{equation} \label{diffEE.10}
\begin{split}
\frac12\frac{d}{dt}\|v_{high}\|_{L^2}^2&=\mathcal{O}\Big(\big(\|\partial_x^{2l-1}u_1\|_{L^{\infty}}+\sum_{j=0}^{2l-1}\|\partial_x^ju_2\|_{L^{\infty}}\big)\|v\|_{L^2}^2 \Big)\\ & \quad+\sum_{j=1}^{l-1}\widetilde{\beta}_j^0 \int\partial_x^{2(l-j)-1}u_2 (\partial_x^jv_{high})^2 \, .\end{split}
\end{equation} 

In the case $\sigma=s$, we deduce from \eqref{Kato-Ponce2} that
\begin{equation} \label{diffEE.11}
\begin{split}
\Big|\int D^s&P_{high}(v\partial_x^{2l-1}u_1)D^sv_{high}\Big|\\& \lesssim \|J^s(v\partial_x^{2l-1}u_1)\|_{L^2}\|D^sv\|_{L^2} 
\\ & \lesssim \|\partial_x^{2l-1}J^su_1\|_{L^2}\|v\|_{L^{\infty}}\|J^sv\|_{L^2} +\|\partial_x^{2l-1}u_1\|_{L^{\infty}}\|J^sv\|_{L^2}^2 \, .
\end{split}
\end{equation}
Next, we deal with the second term on the right-hand side of \eqref{diffEE.7}. By using the commutator estimates \eqref{CommutPhigh1} and \eqref{CommEstimates.1}, and arguing exactly as in \eqref{EE.8}-\eqref{EE.12}, we get that 
\begin{equation} \label{diffEE.12}
\begin{split}
\int D^{s}&P_{high}(u_2\partial_x^{2l-1}v)D^{s}v_{high}\\ &=\mathcal{O}\big(\sum_{j=0}^{2l-1}\|\partial_x^ju_2\|_{L^{\infty}}\|D^sv\|_{L^2}^2 +\|D^su_2\|_{L^2}\|\partial_x^{2l-1}v\|_{L^{\infty}}\|D^sv\|_{L^2} \big) \\& \quad+\sum_{j=0}^{2l-2}\begin{pmatrix} s \\ j \end{pmatrix}\int \partial_x^ju_2D^s\partial_x^{2l-1-j}v_{high}D^sv_{high} 
\\ & =\mathcal{O}\big(\sum_{j=0}^{2l-1}\|\partial_x^ju_2\|_{L^{\infty}}\|D^sv\|_{L^2}^2 +\|D^su_2\|_{L^2}\|\partial_x^{2l-1}v\|_{L^{\infty}}\|D^sv\|_{L^2} \big) \\& \quad+
\sum_{j=1}^{l-1}\widetilde{\beta}_j^s\int\partial_x^{2(l-j)-1}u_2 (D^s\partial_x^jv_{high})^2 \, ,
\end{split}
\end{equation}
where $\widetilde{\beta}_1^s,\cdots,\widetilde{\beta}_{l-1}^s$ are $l-1$ real numbers. Therefore, we deduce gathering \eqref{diffEE.11}-\eqref{diffEE.12} that
\begin{equation} \label{diffEE.13}
\begin{split}
\frac12\frac{d}{dt}&\|D^sv_{high}\|_{L^2}^2\\&=\mathcal{O}\Big(\big(\|\partial_x^{2l-1}u_1\|_{L^{\infty}}+\sum_{j=0}^{2l-1}\|\partial_x^ju_2\|_{L^{\infty}}\big)\|v\|_{H^s}^2 \Big)
\\& +\mathcal{O}\Big(\|\partial_x^{2l-1}J^su_1\|_{L^2}\|v\|_{L^{\infty}}\|J^sv\|_{L^2} 
+\|D^su_2\|_{L^2}\|\partial_x^{2l-1}v\|_{L^{\infty}}\|D^sv\|_{L^2}\Big)
\\ & \quad+\sum_{j=1}^{l-1}\widetilde{\beta}_j^s \int\partial_x^{2(l-j)-1}u_2 (D^s\partial_x^jv_{high})^2 \, .\end{split} 
\end{equation}

Observe that in both cases $\sigma=0$ corresponding to estimate \eqref{diffEE.10} and $\sigma=s$ corresponding to estimate \eqref{diffEE.13}, we cannot handle directly by integration by parts the $l-1$ third-order terms $\sum_{j=1}^{l-1}\widetilde{\beta}_j^{\sigma} \int\partial_x^{2(l-j)-1}u_2 (D^{\sigma}\partial_x^jv_{high})^2 $ appearing on the right-hand side of \eqref{diffEE.10} and \eqref{diffEE.13}. Therefore, we need to add $l-1$ correcting cubic terms to the energy in order to cancel them out. We then define $\widetilde{T}_3^{\sigma}(u_2,v)$ appearing in \eqref{diffEE.2} by 
\begin{equation} \label{diffEE.14}
\widetilde{T}^{\sigma}_3(u_2,v)=\sum_{j=0}^{l-2}\widetilde{\gamma}_{j}\widetilde{T}_{3,j}^{\sigma}(u_2,v) \, ,
\end{equation}
where 
\begin{equation} \label{diffEE.15}
\widetilde{T}_{3,j}^{\sigma}(u_2,v)=\left\{\begin{array}{ll}
\int\partial_{x}^{2j}u_2(D^{{\sigma}-2-j}\partial_xv_{high})^2 & \text{if} \ j \ \text{is even} \\ 
\int\partial_{x}^{2j}u_2(D^{{\sigma}-1-j}v_{high})^2 & \text{if} \ j \ \text{is odd} 
\end{array} \right.  , \ \text{for} \ 0 \le j \le l-2 ,
\end{equation} 
and $\widetilde{\gamma}_{j}$, $0 \le j \le l-2$, are real coefficients to be determined. Note that $D^{\sigma-2-j}\partial_xv_{high}$ and $D^{\sigma-1-j}v_{high}$ make sense even in the case $\sigma=0$, since $v_{high}=P_{high}v$ is the projection of $v$ in high frequencies.

Arguing exactly as in \eqref{EE.18}-\eqref{EE.21}, we find that 
\begin{equation} \label{diffEE.16}
\frac{d}{dt}\widetilde{T}_{3,j}^{\sigma}(u_2,v)=\widetilde{X}^{\sigma}_{3 \to 3,j}(u_2,v)+\widetilde{X}^{\sigma}_{3 \to 4,j}(u_1,u_2,v)
\end{equation}
where $\widetilde{X}^{\sigma}_{3 \to 3,j}(u_2,v)$ denotes the cubic terms resulting from $\frac{d}{dt}\widetilde{T}_{3,j}^{\sigma}(u_2,v)$ and is defined by
\begin{equation} \label{diffEE.17}
\widetilde{X}^{\sigma}_{3 \to 3,j}(u_2,v)=-I_{2l+1}(\partial_x^{2j}u_2,D^{\sigma-2-j}\partial_xv,D^{\sigma-2-j}\partial_xv) \, ,
\end{equation}
and $\widetilde{X}^{\sigma}_{3 \to 4,j}(u_1,u_2,v)$ denotes the fourth-order terms resulting from $\frac{d}{dt}\widetilde{T}_{3,j}^{\sigma}(u_2,v)$ and is defined by
\begin{equation} \label{diffEE.18}
\begin{split}
\widetilde{X}^{\sigma}_{3 \to 4,j}(u_1,u_2,v)&=\int \partial_x^{2j}(u_2\partial_x^{2l-1}u_2)(D^{\sigma-2-j}\partial_xv_{high})^2\\ & \quad +2\int\partial_x^{2j}u_2D^{\sigma-2-j}\partial_xP_{high}(v\partial_x^{2l-1}u_1)D^{\sigma-2-j}\partial_xv_{high} 
\\ &\quad +2\int\partial_x^{2j}u_2D^{\sigma-2-j}\partial_xP_{high}(u_2\partial_x^{2l-1}v)D^{\sigma-2-j}\partial_xv_{high} 
\\ & =: I_j^{\sigma}(u_2,v)+II^{\sigma}_j(u_1,u_2,v)+III^{\sigma}_j(u_2,v) \, .
\end{split}
\end{equation} 

Following the lines of \eqref{EE.18}--\eqref{EE.21} and using respectively \eqref{diffEE.10} and \eqref{diffEE.13} and the Sobolev embedding for $s>4l-\frac92$, we deduce that 
\begin{equation} \label{diffEE.19}
\begin{split}
\Big| \frac12\frac{d}{dt}\|v_{high}\|_{L^2}^2+\widetilde{X}^0_{3 \to 3}(u_2,v) \Big|&\lesssim \Big(\|\partial_x^{2l-1}u_1\|_{L^{\infty}}+\sum_{j=0}^{4l-5}\|\partial_x^ju_2\|_{L^{\infty}}\Big)\|v\|_{L^2}^2 \, ,\\ & \lesssim \big(\|u_1\|_{H^s}+\|u_2\|_{H^s}\big)\|v\|_{L^2}^2
\end{split}
\end{equation}
and 
\begin{equation} \label{diffEE.20}
\begin{split}
\Big| \frac12\frac{d}{dt}\|D^sv_{high}\|_{L^2}^2+\widetilde{X}^s_{3 \to 3}(u_2,v) \Big| &\lesssim  \big(\|u_1\|_{H^s}+\|u_2\|_{H^s}\big)\|v\|_{H^s}^2
\\ & \quad +\|u_1\|_{H^{s+2l-1}}\|v\|_{L^{\infty}}\|J^sv\|_{L^2} \, ,
\end{split}
\end{equation}
if the $l-1$ coefficients $\widetilde{\gamma}_0,\cdots,\widetilde{\gamma}_{l-2}$ are chosen correctly.

Now, we explain how to deal with $\widetilde{X}^{\sigma}_{3 \to 4,j}(u_1,u_2,v)$ for $j=0,\cdots,l-2$. We need to deal with the three terms $I^{\sigma}_j(u_2,v)$, $II^{\sigma}_j(u_1,u_2,v)$ and $III^{\sigma}_j(u_2,v)$ on the right-hand side of \eqref{diffEE.18}. By using H\"older's inequality and the Sobolev embedding, we easily get in both cases $\sigma=0$ and $\sigma=s$ that
\begin{displaymath} 
\big| I^{\sigma}_j(u_2,v) \big| \lesssim \|u_2\|_{H^s}^2\|v\|_{H^{\sigma}}^2 \, .
\end{displaymath}
In the case $\sigma=0$, we estimate $II$ similarly and get 
\begin{displaymath} 
\big| II^0_j(u_1,u_2,v) \big| \lesssim \|u_1\|_{H^s}\|u_2\|_{H^s}\|v\|_{L^2}^2 \, .
\end{displaymath}
In the case $\sigma=s$, we use the commutator estimate \eqref{Kato-Ponce2} and argue as in \eqref{diffEE.11} to deduce that 
\begin{displaymath} 
\big| II^s_j(u_1,u_2,v) \big| \lesssim \|u_1\|_{H^s}\|u_2\|_{H^s}\|v\|_{H^s}^2 +\|u_2\|_{H^s}\|u_1\|_{H^{s+2l-2}}\|v\|_{L^{\infty}}\|v\|_{H^s}\, .
\end{displaymath}
Finally, in order to control $III^{\sigma}(u_2,v)=\sum_{j=0}^{l-2}\widetilde{\gamma}_jIII^{\sigma}_j(u_2,v)$, we follow the argument in \eqref{EE.22}-\eqref{EE.24}. In the case $l=2$, it suffices to use the Kato-Ponce commutator estimate. In the case $l \ge 3$, we need to introduce a fourth-order modification to the energy $\widetilde{T}_4^{\sigma}(u_2,v)$ in such a way that 
\begin{displaymath}
\frac{d}{dt}\widetilde{T}_{4}^{\sigma}(u_2,v)=\widetilde{X}^{\sigma}_{4 \to 4}(u_2,v)+\widetilde{X}^{\sigma}_{4 \to 5}(u_1,u_2,v) 
\end{displaymath}
and 
\begin{displaymath}
\Big| III^{\sigma}(u_2,v)+\widetilde{X}^{\sigma}_{4 \to 4}(u_2,v) \Big| \lesssim  \|u_2\|_{H^s}^2\|v\|_{H^{\sigma}}^2 \, .
\end{displaymath}
Here, $\widetilde{X}^{\sigma}_{4 \to 4}(u_2,v)$, respectively $\widetilde{X}^{\sigma}_{4 \to 5}(u_1,u_2,v) $, denotes the fourth-order terms, respectively fifth-order terms, coming from $\frac{d}{dt}\widetilde{T}_{4}^{\sigma}(u_2,v)$.

In the case where $l=3$, $\widetilde{X}^{\sigma}_{4 \to 5}(u_1,u_2,v)$ can be estimated directly by using the Kato-Ponce commutator estimate. If $l \ge 4$, we need to add a fifth-order term to the energy in order to cancel out the problematic terms appearing in $\widetilde{X}^{\sigma}_{4 \to 5}(u_1,u_2,v)$.

This process will finish after a finite number of modifications to the energy (exactly $l-1$). This yields the proofs of estimates \eqref{diffEE.4} and \eqref{diffEE.5}, which concludes the proof of Proposition \ref{diffEE}.
\end{proof}

\section{Proof of Theorem \ref{localtheory}} \label{ProofTheo}
As mentioned in the previous section, we will prove Theorem \ref{localtheory} in the particular case of equation \eqref{homodel} for the sake of clarity. In this section, we fix $l \in \mathbb N$, $l \ge 2$ and work with $s>s_l=4l-\frac92$.
\smallskip

By scaling, it is enough to deal with initial data $u(\cdot,0)=u_0$ having small $H^s$-norm. Indeed of $u$ is a solution to \eqref{homodel} defined on a time interval $[0,T]$, for some positive time $T$, then, for all $\lambda>0$, $u_{\lambda}(x,t)=\lambda^2u(\lambda x,\lambda^{2l+1}t)$ is also a solution to \eqref{homodel} defined on a time interval $[0,T/\lambda^{2l+1}]$. For any $\delta>0$, we define $\mathcal{B}^s(\delta)$ the ball of $H^s(M)$ centered at the origin and of radius $\delta$. Since $$\|u_{\lambda}(\cdot,0)\|_{H^s} \lesssim \lambda^{\frac32} (1+\lambda^s)\|u_0\|_{H^s} \, ,$$  we can force $u_{\lambda}(\cdot,0)$ to belong to $\mathcal{B}^s(\delta)$ by choosing $\lambda \sim \min\{\big(\frac{\delta}{\|u_0\|_{H^s}}\big)^{\frac23},1\}$. Therefore, the existence and uniqueness of a solution to \eqref{homodel} on a time interval $[0,1]$ for small initial data $\|u_0\|_{H^s}$ will ensure the existence and uniqueness of a solution to \eqref{homodel} for arbitrarily large initial data on a time interval $[0,T]$ for $T \sim \min\{ \|u_0\|_{H^s}^{-2(2l+1)/3},1\}$.

From now on, we assume that $u_0 \in H^s(M)$ satisfies $\|u_0\|_{H^s} \le \delta$, where $\delta$ is a small positive number which will be fixed later. 
\smallskip

The proof of Theorem \ref{localtheory} is based on parabolic regularization, energy estimates and the Bona-Smith argument. For $\mu>0$, we consider the regularized problem 
\begin{equation} \label{dissiphomodel}
\left\{
\begin{array}{l}\partial_tu^{\mu}+\partial_x^{2l+1}u^{\mu}+\mu(-1)^{l+1} \partial_x^{2l+2}u^{\mu}=u^{\mu}\partial_x^{2l-1}u^{\mu} \\ u^{\mu}(\cdot,0)=u_0 \, ,
\end{array} \right. 
\end{equation}

 \smallskip
 Combining the arguments of Lemma 2 in \cite{Sa} and Theorem 5.14 in \cite{IoIo}, we obtain a local well-posedness result for the IVP \eqref{dissiphomodel}.
 \begin{proposition} \label{parabolic}
 Let $s>s_l=4l-\frac92$.
 For every $\mu>0$ and every $u_0 \in H^s(M)$, with $M=\mathbb R$ or $\mathbb T$, there exist a positive maximal time of existence $T_{\mu}=T^s_{\mu}(u_0)$ and a unique solution $u^{\mu}$ to \eqref{homodel} in $C([0,T_{\mu}) : H^s(M))$.
 
Moreover, the \lq\lq extension principle\rq\rq \, holds, \textit{i.e.}: 
\begin{equation} \label{parabolic.1}
\text{either} \quad T_{\mu}=+\infty \quad \text{or} \quad \limsup_{t \nearrow T_{\mu}} \|u^{\mu}(t)\|_{H^s}=+\infty
\end{equation} 
and, for very $0<T<T_{\mu}$, the flow map data-solution 
$$:v_0 \in H^s(M) \mapsto v \in C([0,T] : H^s(M) )$$  is continuous in a neighborhood of $u_0$ in $H^s(M)$. 
\end{proposition}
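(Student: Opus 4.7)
The plan is to prove Proposition \ref{parabolic} by the standard parabolic regularization / contraction-mapping argument in the Duhamel formulation. The dissipative term of order $2l+2$ produces a genuine smoothing of parabolic type, which more than compensates for the $2l-1$ derivatives lost in the nonlinearity $u^\mu\partial_x^{2l-1}u^\mu$; this is the whole motivation for adding it.

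First I analyze the linear semigroup $S^{\mu}(t)$ associated to $\partial_t + \partial_x^{2l+1} + \mu(-1)^{l+1}\partial_x^{2l+2}$. Using $(i\xi)^{2l+2}=(-1)^{l+1}\xi^{2l+2}$, the Fourier symbol of $S^{\mu}(t)$ is
$$\widehat{S^{\mu}(t)}(\xi)=\exp\!\bigl(-i(-1)^l t\,\xi^{2l+1}-\mu t\,\xi^{2l+2}\bigr),$$
whose modulus $e^{-\mu t\xi^{2l+2}}$ is at most $1$. Hence $S^{\mu}$ is a contraction semigroup on every $H^{\sigma}(M)$, and the elementary pointwise bound $\sup_\xi|\xi|^r e^{-\mu t\xi^{2l+2}}\lesssim (\mu t)^{-r/(2l+2)}$ yields the smoothing estimate
$$\|D^r S^{\mu}(t)f\|_{L^2}\lesssim (\mu t)^{-\frac{r}{2l+2}}\|f\|_{L^2},\qquad r\ge 0,\ t>0.$$

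Next I would set up the contraction. Write \eqref{dissiphomodel} as $u=\Phi(u)$ with
$$\Phi(u)(t):=S^{\mu}(t)u_0+\int_0^t S^{\mu}(t-t')\bigl(u\,\partial_x^{2l-1}u\bigr)(t')\,dt',$$
and look for a fixed point in the ball $B_{R,T}:=\{u\in C([0,T];H^s(M)):\|u\|_{L_T^\infty H^s}\le R\}$ with $R=2\|u_0\|_{H^s}$. Since $s>4l-\tfrac{9}{2}\ge 2l-\tfrac12$ (this uses $l\ge 2$), we have $s-(2l-1)>\tfrac12$, so the Kato--Ponce product rule (Lemma \ref{Kato-Ponce} on $\mathbb R$, Lemma \ref{Kato-PonceT} on $\mathbb T$) combined with Sobolev embedding gives the algebra-type bound
$$\|u\,\partial_x^{2l-1}u\|_{H^{s-(2l-1)}}\lesssim \|u\|_{H^s}\|u\|_{H^{s}}=\|u\|_{H^s}^2.$$
Applying the smoothing estimate with $r=2l-1$ and integrating in $t'$, and using $1-\tfrac{2l-1}{2l+2}=\tfrac{3}{2l+2}>0$,
$$\Bigl\|\int_0^t S^{\mu}(t-t')(u\partial_x^{2l-1}u)(t')\,dt'\Bigr\|_{H^s}\lesssim \mu^{-\frac{2l-1}{2l+2}}\,T^{\frac{3}{2l+2}}\,\|u\|_{L_T^\infty H^s}^2.$$
A completely analogous bound on the bilinearly-Lipschitz expression $u\partial_x^{2l-1}u-v\partial_x^{2l-1}v$ controls $\|\Phi(u)-\Phi(v)\|_{L_T^\infty H^s}$ by $C\mu^{-(2l-1)/(2l+2)}T^{3/(2l+2)}(R)\,\|u-v\|_{L_T^\infty H^s}$. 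Choosing $T=T_\mu^s(u_0)>0$ small enough that $C\mu^{-(2l-1)/(2l+2)}T^{3/(2l+2)}R\le \tfrac12$, $\Phi$ is a strict contraction on $B_{R,T}$ and produces a unique fixed point $u^{\mu}\in C([0,T];H^s(M))$.

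Finally, local uniqueness together with the standard concatenation/maximality argument extends $u^{\mu}$ to a maximal time interval $[0,T_\mu)$ and gives the blow-up alternative \eqref{parabolic.1}: if $\limsup_{t\nearrow T_\mu}\|u^{\mu}(t)\|_{H^s}<\infty$, one could restart the contraction from some $t_0$ close to $T_\mu$ with initial data $u^{\mu}(t_0)$ of bounded $H^s$-norm, yielding a solution past $T_\mu$ and contradicting maximality. Continuous dependence in a neighborhood of $u_0$ follows by running the same contraction estimates on the Duhamel identity for the difference of two solutions associated to nearby initial data, concluding via Gronwall. The only technical point is bookkeeping: matching the derivative loss $2l-1$ of the nonlinearity against the smoothing order $2l+2$ of the semigroup, and checking $s-(2l-1)>\tfrac12$ against the hypothesis $s>4l-\tfrac92$; everything else is routine semigroup machinery, as in \cite{Sa,IoIo}.
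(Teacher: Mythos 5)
Your proposal is correct and follows essentially the same route the paper intends: the paper gives no proof of Proposition \ref{parabolic}, simply citing Lemma 2 of \cite{Sa} and Theorem 5.14 of \cite{IoIo}, both of which carry out exactly this semigroup smoothing estimate plus contraction on the Duhamel formulation, with the blow-up alternative and continuous dependence following from the fixed-point setup. Your bookkeeping checks out: the symbol of $\mu(-1)^{l+1}\partial_x^{2l+2}$ is indeed $\mu\xi^{2l+2}$, the loss of $2l-1$ derivatives is absorbed with the integrable singularity exponent $\tfrac{2l-1}{2l+2}<1$, and $s-(2l-1)>\tfrac12$ holds under $s>4l-\tfrac92$ for $l\ge 2$.
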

 
\subsection{\textit{A priori} estimates on the solutions $u^{\mu}$} \label{APE}
 
\begin{proposition} \label{apriorienergy}
Assume that $s>s_l=4l-\frac92$ and $0<\mu \le 1$. Let $u^{\mu} \in C([0,T_{\mu}) : H^s(M))$ be the solution of \eqref{dissiphomodel} obtained in Proposition \ref{parabolic} and $E^s(u^{\mu})$ be the modified energy constructed in Proposition \ref{EE}. Then, there exists a positive constant $\delta_0$ (independent of $\mu \in (0,1]$)  such that the following properties hold true. 
 
\medskip
(1) \underline{Coercivity}. 
\begin{equation} \label{apriorienergy.1}
\frac14\|u^{\mu}(t)\|_{H^s}^2 \le E^s(u^{\mu})(t) \le \frac34\|u^{\mu}(t)\|_{H^s}^2 \, ,
\end{equation}
for all $t \in [0,T]$, if $u^{\mu}$ satisfies $\|u^{\mu}\|_{L^{\infty}_TH^s_x} \le \delta_0$ for some $0<T<T_{\mu}$.
 
\medskip
(2) \underline{Energy estimate}.
\begin{equation} \label{apriorienergy.2}
\left|\frac{d}{dt}E^s(u^{\mu})(t)+\mu \|\partial_x^{l+1}u^{\mu}(\cdot,t)\|_{H^s}^2 \right|\lesssim \left(\sum_{k=1}^{l-1}\|u^{\mu}(t)\|_{H^s}^k\right)\|u^{\mu}(t)\|_{H^s}^2 \, ,
\end{equation}
for all $t \in (0,T)$, if $u^{\mu}$ satisfies $\|u^{\mu}\|_{L^{\infty}_TH^s_x} \le \delta_0$ for some $0<T<T_{\mu}$.	
\end{proposition}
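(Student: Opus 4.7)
The plan is to reuse the modified energy $E^s$ built in Proposition~\ref{EE} and track only the extra contributions produced by the parabolic term $-\mu(-1)^{l+1}\partial_x^{2l+2}u^\mu$ in \eqref{dissiphomodel}. Coercivity (1) requires nothing new: each correction $T_k^s(u^\mu)$ is a spatial integral of a monomial of degree $k$ in $u^\mu$ and its derivatives in which at most two factors carry $s$ derivatives and the remaining ones carry $O(l)$ derivatives, so Sobolev embedding under $s > 4l - \tfrac{9}{2}$ gives $|T_k^s(u^\mu)| \lesssim \|u^\mu\|_{H^s}^k$. Summing over $k=3,\dots,l+1$ yields $|E^s(u^\mu) - \tfrac12\|u^\mu\|_{H^s}^2| \lesssim \sum_{k=3}^{l+1}\|u^\mu\|_{H^s}^k$, which is $\le \tfrac14\|u^\mu\|_{H^s}^2$ once $\delta_0$ is chosen small enough, independently of $\mu$.

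For the energy estimate (2), I would insert $\partial_t u^\mu = -\partial_x^{2l+1}u^\mu + u^\mu\partial_x^{2l-1}u^\mu - \mu(-1)^{l+1}\partial_x^{2l+2}u^\mu$ into $\frac{d}{dt}E^s(u^\mu)$ and split the resulting expression into a \emph{conservative} and a \emph{dissipative} part. The conservative part coincides with the one treated in Proposition~\ref{EE} and is therefore already bounded by $\big(\sum_{k=1}^{l-1}\|u^\mu\|_{H^s}^k\big)\|u^\mu\|_{H^s}^2$. The dissipative part has two origins. First, its contribution to $\frac12\frac{d}{dt}\|u^\mu\|_{H^s}^2 = \int J^s u^\mu\, J^s\partial_t u^\mu$ becomes, after $l+1$ integrations by parts, exactly $-\mu\|\partial_x^{l+1}u^\mu\|_{H^s}^2$, which is the sign-definite quantity isolated on the left-hand side of \eqref{apriorienergy.2}.

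The main work, and the main obstacle, lies in controlling the second dissipative origin: the contribution to $\frac{d}{dt}T_k^s(u^\mu)$ for $k=3,\dots,l+1$. This is a finite sum of $\mu$-weighted multilinear integrals of total derivative order $2s+2l$ in $k$ factors of $u^\mu$. I would redistribute derivatives via integration by parts and the Leibniz rule so that, in each resulting term, one factor carries at most $s+l+1$ derivatives, a second factor at most $s$, and the remaining $k-2$ factors only $O(l)$ derivatives. This redistribution is possible because each $T_k^s$ was constructed from factors of order either $O(l)$ or $s + O(l)$, leaving enough room to absorb the $2l+2$ extra derivatives brought by the dissipation into the highest-order slot. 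H\"older combined with Sobolev embedding (again with $s > 4l - \tfrac{9}{2}$) then bounds each such term by $\mu\|\partial_x^{l+1}u^\mu\|_{H^s}\|u^\mu\|_{H^s}^{k-1}$. Young's inequality gives
\[
\mu\|\partial_x^{l+1}u^\mu\|_{H^s}\|u^\mu\|_{H^s}^{k-1} \le \eta\mu\|\partial_x^{l+1}u^\mu\|_{H^s}^2 + C_\eta\mu\|u^\mu\|_{H^s}^{2(k-1)},
\]
and choosing $\eta$ small enough, the first piece (summed over $k$) is absorbed into a fixed fraction of the good $-\mu\|\partial_x^{l+1}u^\mu\|_{H^s}^2$. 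Using $\mu\le 1$ and the smallness assumption $\|u^\mu\|_{H^s}\le\delta_0\le 1$ to compare any exponent $2(k-1)\ge 4$ with those in $\{3,4,\dots,l+1\}$, the remainder $C_\eta\mu\|u^\mu\|_{H^s}^{2(k-1)}$ is dominated by $\big(\sum_{k'=1}^{l-1}\|u^\mu\|_{H^s}^{k'}\big)\|u^\mu\|_{H^s}^2$, which is exactly the right-hand side of \eqref{apriorienergy.2}.
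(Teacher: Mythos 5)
Your overall architecture (coercivity as in Proposition \ref{EE}; splitting $\frac{d}{dt}E^s(u^\mu)$ into the conservative part, already handled, plus the dissipative contributions; extracting the good term $-\mu\|\partial_x^{l+1}u^\mu\|_{H^s}^2$ from the quadratic part) matches the paper. The gap is in your treatment of the dissipative contribution to the corrections $T_k^s(u^\mu)$. Take the worst case $T^s_{3,0}(u^\mu)=\int u^\mu (D^{s-2}\partial_x u^\mu)^2$: the dissipation produces, among others, $2\mu(-1)^{l}\int u^\mu\, D^{s-2}\partial_x^{2l+3}u^\mu\, D^{s-2}\partial_x u^\mu$, whose derivative distribution is $(0,\,s+2l+1,\,s-1)$. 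To bring the highest slot down to $s+l+1$ you must shed $l$ derivatives by integration by parts, but Leibniz distributes them over \emph{both} remaining factors, and the second high factor starts already at $s-1$: among the resulting terms you unavoidably get distributions like $(0,\,s+l+1,\,s+l-1)$, i.e.\ \emph{two} factors above order $s$. Your claimed normal form \lq\lq one factor at most $s+l+1$, a second at most $s$, the rest $O(l)$\rq\rq\ is therefore not achievable, the intermediate bound $\mu\|\partial_x^{l+1}u^\mu\|_{H^s}\|u^\mu\|_{H^s}^{k-1}$ fails, and the Young's-inequality absorption built on it collapses.

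The paper's proof handles exactly this point by exploiting the symmetric structure: repeated integration by parts organizes the dissipative cubic terms into $I_{2(l-k)+1}$-expressions plus perfect squares, the worst being $-2\mu\int \partial_x^{2j}u^{\mu}(D^{s-2-j}\partial_x^{l+2}u^{\mu})^2$, with \emph{two} factors each carrying $s+l-j\le s+l$ derivatives. Each such factor is controlled by $\|\partial_x^{l+1}u^\mu\|_{H^s}$, so the term is bounded by $C\mu\|\partial_x^{2j}u^\mu\|_{L^\infty}\|\partial_x^{l+1}u^\mu\|_{H^s}^2\le C\mu\,\delta_0\,\|\partial_x^{l+1}u^\mu\|_{H^s}^2$, and the absorption into the good term is \emph{multiplicative}, via the smallness hypothesis $\|u^\mu\|_{L^\infty_TH^s_x}\le\delta_0$, not via Young's inequality. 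This is also why part (2) of the proposition carries that smallness hypothesis at all --- a hypothesis your argument never genuinely uses for the absorption, which is a symptom of the gap. Your proof can be repaired by replacing the asymmetric redistribution with the paper's symmetric one and invoking $\delta_0$ small at that step.
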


\begin{proof} The proof of Proposition \ref{apriorienergy} follows the lines of the one of Proposition \ref{EE} for the dissipationless equation. We explain now how to deal with the dissipation in the argument. 

Arguing exactly as in \eqref{EE.6}-\eqref{EE.12} with the solutions $u^{\mu}$ of \eqref{dissiphomodel}, we get that
\begin{equation} \label{apriorienergy.3}
\begin{split}
\frac12\frac{d}{dt}\|u^{\mu}\|_{H^s}^2&+\mu \int(\partial_x^{l+1}u^{\mu})^2+\mu \int(D^s\partial_x^{l+1}u^{\mu})^2 
\\ & =\mathcal{O}(\|\partial_x^{l-1}u^{\mu}\|_{L^{\infty}}\|u^{\mu}\|_{H^s}^2)+\sum_{j=1}^{l-1}\beta_j\int \partial_x^{2(l-j)-1}u^{\mu}(D^s\partial_x^ju^{\mu})^2 \, ,
\end{split}
\end{equation}
where $\beta_j$, $j=1\cdots l-1,$ are real numbers. 

In order to handle the $l-1$ cubic terms on the right-hand side of \eqref{apriorienergy.3}, we introduce the cubic modified energy $T_3^s(u^{\mu})=\sum_{j=0}^{l-2}\gamma_{j}T_{3,j}^s(u^{\mu}) $
where 
\begin{displaymath} 
T_{3,j}^s(u^{\mu})=\left\{\begin{array}{ll}
\int\partial_{x}^{2j}u^{\mu}(D^{s-2-j}\partial_xu^{\mu})^2 & \text{if} \ j \ \text{is even} \\ 
\int\partial_{x}^{2j}u^{\mu}(D^{s-1-j}u^{\mu})^2 & \text{if} \ j \ \text{is odd} 
\end{array} \right.  , \ \text{for} \ 0 \le j \le l-2 ,
\end{displaymath} 
and $\gamma_{j}$, $0 \le j \le l-2$, are real numbers. By using the equation in \eqref{dissiphomodel}, we have that 
\begin{equation} \label{apriorienergy.4}
\frac{d}{dt}T_{3}^s(u^{\mu})=X^s_{3 \to 3}(u^{\mu})+\mu D_{3 \to 3}(u^{\mu})+X^s_{3 \to 4}(u^{\mu})
\end{equation}
where $X^s_{3 \to 3}(u^{\mu})$ denotes the cubic terms resulting from $\frac{d}{dt}T_{3}^s(u^{\mu})$ associated to the dispersive term $\partial_x^{2l+1}u^{\mu}$, $ D_{3 \to 3}(u^{\mu})$ denotes the cubic terms  resulting from $\frac{d}{dt}T_{3}^s(u^{\mu})$ associated to the dissipative term $(-1)^{l+1}\partial_x^{2l+2}u^{\mu}$ and $X^s_{3 \to 4}(u^{\mu})$ denotes the fourth-order terms resulting from $\frac{d}{dt}T_{3}(u^{\mu})$. 

Arguing as in \eqref{EE.18}-\eqref{EE.21}, we can choose the $l-1$ coefficients $\gamma_0,\cdots,\gamma_{l-2}$ so that 
\begin{equation} \label{apriorienergy.5}
 \left|\sum_{j=1}^{l-1}\beta_j\int \partial_x^{2(l-j)-1}u^{\mu}(D^s\partial_x^ju^{\mu})^2+X^s_{3 \to 3}(u) \right|\lesssim \|u^{\mu}\|_{H^s}^2 \, ,
\end{equation}
since $s>4l-\frac92$.

Next, we explain how to handle the term $D_{3\to3}^s(u^{\mu})=\sum_{j=0}^{l-2}D_{3\to3,j}^s(u^{\mu})$, where $D_{3\to3,j}^s(u^{\mu})$ denotes the cubic terms resulting from $\frac{d}{dt}T_{3,j}^s(u^{\mu})$ associated to the dispersive term $\partial_x^{2l+1}u^{\mu}$. We treat for example the case where $j$ is even. We deduce, integrating by parts and using the notation in \eqref{I.1}, that
\begin{displaymath}
\begin{split}
&(-1)^lD_{3 \to 3,j}^s(u^{\mu})\\&=\int \partial_x^{2j+2l+2}u^{\mu}(D^{s-2-j}\partial_xu^{\mu})^2+2\int \partial_x^{2j}u^{\mu}D^{s-2-j}\partial_x^{2l+3}u^{\mu}D^{s-2-j}\partial_xu^{\mu}
\\ &=-I_{2l+1}(\partial_x^{2j+1}u^{\mu},D^{s-2-j}\partial_xu^{\mu},D^{s-2-j}\partial_xu^{\mu})
-2\int \partial_x^{2j}u^{\mu}D^{s-2-j}\partial_x^{2l+2}u^{\mu}D^{s-2-j}\partial_x^2u^{\mu}.
\end{split}
\end{displaymath}
Then, we get, after repeating $l-1$ times this operation that 
\begin{align*} 
D_{3 \to 3,j}^s(u^{\mu})&=\sum_{k=0}^{l-1}(-1)^{l+k+1}I_{2(l-k)+1}(\partial_x^{2j+1}u^{\mu},D^{s-2-j}\partial_x^{k+1}u^{\mu},D^{s-2-j}\partial_x^{k+1}u^{\mu}) 
\nonumber\\ & \quad +\sum_{k=1}^{l-1}(-1)^{l+k+1}\int \partial_x^{2j+2(l+1-k)}u^{\mu} (D^{s-2-j}\partial_x^{k+1}u^{\mu})^2
\\ & \quad  - 2\int \partial_x^{2j}u^{\mu}(D^{s-2-j}\partial_x^{l+2}u^{\mu})^2 \, . \nonumber
\end{align*}
Moreover, it follows from Lemma \ref{technical} that 
\begin{displaymath} 
\begin{split}
I_{2(l-k)+1}(\partial_x^{2j+1}&u^{\mu},D^{s-2-j}\partial_x^{k+1}u^{\mu},D^{s-2-j}\partial_x^{k+1}u^{\mu}) 
\\ &=\sum_{m=1}^{l-k}\alpha_{m,l-k}\int \partial_x^{2(l-k+m)+2j+1}u^{\mu}(D^{s-2-j}\partial_x^{k+1+m}u^{\mu})^2
\end{split}
\end{displaymath}
for each $0 \le k \le l-1$, where $\alpha_{1,l-k},\cdots,\alpha_{m,l-k}$ are $l-k$ real numbers. Hence, we deduce from the Sobolev embedding and the smallness assumption $\|u^{\mu}\|_{L^{\infty}_TH^s_x} \le \delta_0$ that
\begin{equation} \label{apriorienergy.6}
\left|\mu D_{3\to3}^s(u^{\mu})+\mu \int(\partial_x^{l+1}u^{\mu})^2+\mu \int(D^s\partial_x^{l+1}u^{\mu})^2 \right| \lesssim \mu \|u^{\mu}\|_{H^s}^3 \, ,
\end{equation}
if $\delta_0$ is chosen to be small enough.

Therefore, we conclude gathering \eqref{apriorienergy.3}, \eqref{apriorienergy.5} and \eqref{apriorienergy.6} that 
\begin{displaymath} 
\left|\frac12\frac{d}{dt}\|u^{\mu}\|_{H^s}^2+ X_{3\to3}^s(u^{\mu})+\mu D_{3\to3}^s(u^{\mu})+\mu \int(\partial_x^{l+1}u^{\mu})^2+\mu \int(D^s\partial_x^{l+1}u^{\mu})^2 \right| \lesssim \|u^{\mu}\|_{H^s}^3
\end{displaymath}

In view of \eqref{apriorienergy.4}, it remains to control the fourth-order term $X_{3\to4}^s(u^{\mu})$. We proceed as at the end of the proof of Proposition \ref{EE}. If $l=2$, it can be done directly by using the Kato-Ponce commutator estimate. If $l=2$, we need to add a fourth-order contribution $T_4^s(u^{\mu})$ to the energy in order to cancel out the bad terms appearing in $X_{3\to4}^s(u^{\mu})$. When differentiating $T_4^s(u^{\mu})$ with respect to the time, we get that 
\begin{displaymath}
\frac{d}{dt}T_4^s(u^{\mu})=X^s_{4 \to 4}(u^{\mu})+\mu D^s_{4 \to 4}(u^{\mu})+X^s_{4 \to 5}(u^{\mu}) \, .
\end{displaymath} 
We estimate the dissipative contribution $ \mu D^s_{4 \to 4}(u^{\mu})$ exactly as we did for $ \mu D^s_{3\to 3}(u^{\mu})$\footnote{It is actually easier since $D^s_{4 \to 4}(u^{\mu})$ contains fewer derivatives than $D^s_{3\to 3}(u^{\mu})$.} In the case $l=3$, we can estimate $X^s_{4 \to 5}(u^{\mu})$ by using the Kato-Ponce commutator estimate. In the case where $l \ge 4$, we need to repeat the process one more step. 

This process will finish after a finite number of modifications to the energy (exactly $l-1$). This yields estimate \eqref{apriorienergy.2}, which concludes the proof of Proposition \ref{apriorienergy}.
\end{proof}

With Proposition \ref{apriorienergy} in hand, we are in a position to derive suitable  \textit{a priori} estimates on the solutions $u^{\mu}$ of \eqref{dissiphomodel} at the $H^s$-level.  

\begin{lemma} \label{aprioriestimate}
Assume that $s>s_l=4l-\frac92$. There exists $\delta_1>0$ such that if $u_0 \in H^s(M)$ satisfies $\|u_0\|_{H^s} \le \delta_1$, then the solution $u^{\mu} \in C([0,T_{\mu}) : H^s(M))$ to \eqref{dissiphomodel} is defined on a maximal time of existence $T_{\mu} \ge 1$ and satisfies
\begin{equation} \label{aprioriestimate.1}
\|u^{\mu}\|_{L^{\infty}_1H^s_x}+\mu\|\partial_x^{l+1}u^{\mu}\|_{L^2_1H^s_x} \lesssim \|u_0\|_{H^s} \, ,
\end{equation}
for all $0<\mu \le 1$.
\end{lemma}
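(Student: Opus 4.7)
The plan is a standard bootstrap (continuity) argument based on the \emph{a priori} estimate of Proposition \ref{apriorienergy}, closed by the blow-up alternative \eqref{parabolic.1}. The energy estimate \eqref{apriorienergy.2} is a conditional statement (valid only as long as $\|u^\mu\|_{L^\infty_T H^s_x} \le \delta_0$), so the whole game is to show that, for data with $\|u_0\|_{H^s} \le \delta_1$ with $\delta_1$ sufficiently small, this conditional hypothesis propagates up to time $1$.

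I would fix $\delta_1$ to be a small fraction of $\delta_0$ (to be pinned down after the ODE comparison below), take $u_0 \in H^s(M)$ with $\|u_0\|_{H^s} \le \delta_1$, let $u^\mu$ be the solution on $[0,T_\mu)$ given by Proposition \ref{parabolic}, and set
$T^\star := \sup\{T \in (0,T_\mu) : \|u^\mu\|_{L^\infty_T H^s_x} \le \delta_0\}$;
by continuity $T^\star > 0$. On $[0,T^\star]$ both \eqref{apriorienergy.1} and \eqref{apriorienergy.2} are available. Since the dissipative term $\mu\|\partial_x^{l+1}u^\mu\|_{H^s}^2$ is nonnegative and $\|u^\mu\|_{H^s} \le \delta_0 \le 1$, the factor $\sum_{k=1}^{l-1}\|u^\mu\|_{H^s}^k$ on the right of \eqref{apriorienergy.2} is controlled by $\|u^\mu\|_{H^s}$, and coercivity \eqref{apriorienergy.1} then reduces \eqref{apriorienergy.2} to the scalar ODE inequality $y'(t) \le C y(t)^{3/2}$ for $y(t) := E^s(u^\mu)(t)$.

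Comparing with the ODE $z' = Cz^{3/2}$, $z(0) = \tfrac34\delta_1^2$, whose solution $z(t) = z(0)(1 - \tfrac{C}{2}\sqrt{z(0)}\,t)^{-2}$ stays below $2z(0)$ on $[0,1]$ provided $\delta_1$ is small enough, I get $E^s(u^\mu)(t) \le \tfrac32 \delta_1^2$ on $[0,\min(1,T^\star)]$, hence $\|u^\mu(t)\|_{H^s} \le C_1 \delta_1$ by \eqref{apriorienergy.1}. Choosing $\delta_1 \le \delta_0/(2C_1)$ forces the strict inequality $\|u^\mu(t)\|_{H^s} \le \delta_0/2 < \delta_0$ on that interval. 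By continuity, this strict inequality rules out $T^\star < T_\mu$, so $T^\star = T_\mu$; and if one had $T_\mu < 1$, the extension principle \eqref{parabolic.1} would force $\limsup_{t \nearrow T_\mu}\|u^\mu(t)\|_{H^s} = +\infty$, contradicting the uniform bound just obtained. Therefore $T_\mu > 1$ and $\|u^\mu\|_{L^\infty_1 H^s_x} \lesssim \|u_0\|_{H^s}$.

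For the dissipative half of \eqref{aprioriestimate.1}, I would integrate \eqref{apriorienergy.2} on $[0,1]$: the telescoping of $\frac{d}{dt}E^s$ and the uniform bound on $\|u^\mu\|_{L^\infty_1 H^s_x}$ just proved yield $\mu \int_0^1 \|\partial_x^{l+1}u^\mu(\cdot,t)\|_{H^s}^2\,dt \lesssim \|u_0\|_{H^s}^2$, i.e.\ $\sqrt{\mu}\,\|\partial_x^{l+1}u^\mu\|_{L^2_1 H^s_x} \lesssim \|u_0\|_{H^s}$, which is stronger than the stated bound since $\mu \le \sqrt{\mu}$ for $\mu \in (0,1]$. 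The only mildly delicate point is the choice of $\delta_1$ small enough to close the ODE comparison on the full interval $[0,1]$; all the analytic content is already packaged inside Proposition \ref{apriorienergy}, and no obstacle beyond routine bootstrap bookkeeping is expected.
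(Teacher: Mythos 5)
Your proposal is correct and follows essentially the same route as the paper: a bootstrap on the conditional energy estimate of Proposition \ref{apriorienergy}, closed via the extension principle \eqref{parabolic.1}, with the only cosmetic difference that you phrase the continuity argument as an explicit ODE comparison $y'\le Cy^{3/2}$ while the paper integrates the inequality and argues on the resulting polynomial bound. One tiny bookkeeping point: to get the stated conclusion $\|u^{\mu}\|_{L^{\infty}_1H^s_x}\lesssim\|u_0\|_{H^s}$ (rather than merely $\lesssim\delta_1$), rerun your comparison with $z(0)=E^s(u^{\mu})(0)\le\tfrac34\|u_0\|_{H^s}^2$, which the same smallness of $\delta_1$ keeps bounded by $4z(0)$ on $[0,1]$.
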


\begin{proof} Fix $\mu \in (0,1]$ and let $u^{\mu}$ be the solution of \eqref{dissiphomodel} defined on its maximal time interval $[0,T_{\mu})$.
	
Assume by contradiction that $T_{\mu} <1$. Fix some $T \in (0,T_{\mu})$. By integrating \eqref{apriorienergy.2} and using \eqref{apriorienergy.1}, we deduce that 
\begin{displaymath}
\|u^{\mu}\|_{L^{\infty}_TH^s_x}^2 +\mu\int_0^T\|\partial_x^{l+1}u^{\mu}(\cdot,t)\|_{H^s}^2dt \le \|u_0^{\mu}\|_{H^s_x}^2+C\sum_{k=1}^{l-1}\|u^{\mu}\|_{L^{\infty}_TH^s_x}^{k+2} \, ,
\end{displaymath}
as soon as $\|u^{\mu}\|_{L^{\infty}_TH^s_x} \le \delta_0$. Moreover, it follows from Proposition \ref{parabolic} that 
\begin{displaymath}
\lim_{t \to 0}\|u^{\mu}(t)\|_{H^s}= \|u_0\|_{H^s} \, .
\end{displaymath} 
By using a continuity argument, these two facts ensure the existence of a small positive constant $\widetilde{\delta_1}$ (independent of $\delta_0$) such that if $\|u_0\|_{H^s} \le \widetilde{\delta_1}$, then 
\begin{displaymath}
\|u^{\mu}\|_{L^{\infty}_TH^s_x} \le C\|u_0\|_{H^s} \, ,
\end{displaymath}  
as soon as $\|u^{\mu}\|_{L^{\infty}_TH^s_x} \le \delta_0$. Therefore, if $\|u_0\|_{H^s} \le \delta_1:=\min\{\widetilde{\delta_1},\delta_0/2C\}$, the solution $u^{\mu}$ of \eqref{dissiphomodel} satisfies 
\begin{displaymath}
\|u^{\mu}\|_{L^{\infty}_TH^s_x} \le \delta_0/2 \, .
\end{displaymath}

This implies that $\limsup_{t \nearrow T_{\mu}} \|u^{\mu}(t)\|_{H^s} \le \delta_0/2$, since $T$ was chosen arbitrarily in $(0,T_{\mu})$. Hence, it follows from the \lq\lq extension principle\rq\rq \,  in \eqref{parabolic.1} that $T_{\mu}=+\infty$, which is absurd. 

Therefore, we deduce that $T_{\mu} \ge 1$, and then \eqref{aprioriestimate.1} follows by reapplying the above argument with $T=1$.
\end{proof}
 
\subsection{Existence}  \label{EX} Let $u_0 \in H^s(M)$. As explained above, we can always assume that $\|u_0\|_{H^s} \le \delta_1 $. Then, it follows from Lemma \ref{aprioriestimate}, that, for $0<\mu \le 1$, the solutions $u^{\mu}$ obtained in Proposition \ref{parabolic} are defined on time interval $[0,1]$ and satisfy the \textit{a priori}  estimate \eqref{aprioriestimate.1}. 

First, we will prove that $\{u^{\mu}\}_{0<\mu \le 1}$ is a Cauchy sequence in $C([0,1] : H^{s_-}(M))$, where $s_-$ is any number slightly lesser than $s$. Let $0<\mu'<\mu \le 1$. We define $v=u^{\mu}-u^{\mu'}$. Then $v$ is solution to the equation
 \begin{equation} \label{dissipdiff}
\begin{split}
 \partial_tv+\partial_x^{2l+1}v+\mu(-1)^{l+1}\partial_x^{2l+2}v+(\mu-\mu')(-1)^{l+1}&\partial_x^{2l+2}u^{\mu'}
 \\ &=v\partial_x^{2l-1}u^{\mu}+u^{\mu'}\partial_x^{2l-1}v 
\end{split}
 \end{equation}
with initial datum $v(\cdot,0)=0$. In the next Proposition, we derive a $L^2$ energy estimate for $v$.

\begin{proposition} \label{diffapriorienergy}
Assume that $s>s_l=4l-\frac92$ and $0<\mu'<\mu \le 1$. Let $v \in C([0,1] : H^s(M))$ be the solution of \eqref{dissipdiff}  and let $\tilde{E}^0(v)$ be the modified energy constructed in Proposition \ref{diffEE}. Then, there exists a small positive constant $\delta_2$ (independent of $0<\mu'<\mu \le 1$)  such that the following properties hold true. 

\medskip
(1) \underline{Coercivity}. 
\begin{equation} \label{diffapriorienergy.1}
\frac14\|v(t)\|_{L^2}^2 \le \widetilde{E}^0(v)(t) \le \frac34\|v(t)\|_{L^2}^2 \, ,
\end{equation}
for all $t \in [0,1]$ if $\|u^{\mu'}\|_{L^{\infty}_1H^s_x}<\delta_2$.

\medskip
(2) \underline{$L^2$-Energy estimate}.
\begin{equation} \label{diffapriorienergy.2}
\begin{split}
&\left|\frac{d}{dt}\widetilde{E}^0(v)(t)+\mu \|\partial_x^{l+1}v(\cdot,t)\|_{L^2}^2\right|  \\ &\lesssim \left(\sum_{k=1}^{l-1}\Big(\|u^{\mu}(t)\|_{H^s}+\|u^{\mu'}(t)\|_{H^s}\Big)^k\right)\|v(t)\|_{L^2}^2+(\mu-\mu')\left|\int \partial_x^{l+1}u^{\mu'}\partial_x^{l+1}v\right| ,
\end{split}
\end{equation}
for all for all $t \in (0,1)$ if $\|u^{\mu'}\|_{L^{\infty}_1H^s_x} < \delta_2$.
\end{proposition}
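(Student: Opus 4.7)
My plan starts by noting that the coercivity bound \eqref{diffapriorienergy.1} is an immediate consequence of Proposition \ref{diffEE}(1) applied with $\sigma=0$: the modified energy $\widetilde{E}^0(v)$ has exactly the form \eqref{diffEE.2} and the cubic corrections $\widetilde{T}_{3,j}^0(u^{\mu'},v)$ do not see the dissipation at all, so the same constant $\delta$ works and it suffices to choose $\delta_2\le\delta$.

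For the $L^2$-energy estimate I would repeat the argument of Proposition \ref{diffEE} in the $\sigma=0$ case (that is, the chain of identities \eqref{diffEE.6}--\eqref{diffEE.19}) applied now to the dissipative equation \eqref{dissipdiff} in place of \eqref{diffeq}. Two new contributions then appear at the quadratic level. First, testing $\mu(-1)^{l+1}\partial_x^{2l+2}v$ against $v_{low}$ and $v_{high}$ and integrating by parts $l+1$ times produces $-\mu\|\partial_x^{l+1}v_{low}\|_{L^2}^2-\mu\|\partial_x^{l+1}v_{high}\|_{L^2}^2$; the missing cross term $-2\mu\int\partial_x^{l+1}v_{low}\partial_x^{l+1}v_{high}$ is controlled by $C\mu\|v\|_{L^2}^2$ using that $\partial_x^{l+1}v_{low}$ is bounded in $L^2$ by $\|v\|_{L^2}$, so up to terms absorbed into the right-hand side we recover exactly $-\mu\|\partial_x^{l+1}v\|_{L^2}^2$, which passes to the left of \eqref{diffapriorienergy.2}. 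Second, testing $-(\mu-\mu')(-1)^{l+1}\partial_x^{2l+2}u^{\mu'}$ against $v$ and integrating by parts $l+1$ times produces exactly $-(\mu-\mu')\int\partial_x^{l+1}u^{\mu'}\partial_x^{l+1}v$, which is precisely the extra source term on the right-hand side of \eqref{diffapriorienergy.2}.

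It then remains to handle the new pieces generated when one substitutes \eqref{dissiphomodel} into $\partial_t u^{\mu'}$ and \eqref{dissipdiff} into $\partial_t v$ inside the time derivative of each modified term $\widetilde{T}_k^0(u^{\mu'},v)$, $3\le k\le l+1$. The dispersive and nonlinear contributions are treated exactly as in Proposition \ref{diffEE}, so only the three dissipative pieces $\mu'\partial_x^{2l+2}u^{\mu'}$, $\mu\partial_x^{2l+2}v$ and $(\mu-\mu')\partial_x^{2l+2}u^{\mu'}$ need new attention. Following the argument of Proposition \ref{apriorienergy} (compare \eqref{apriorienergy.6}), I would integrate by parts repeatedly to redistribute the $2l+2$ high-order derivatives across the other factors and then invoke the Sobolev embedding available for $s>4l-\frac{9}{2}$. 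Since $\mu,\mu',\mu-\mu'\le 1$ and $\|u^{\mu'}\|_{H^s}<\delta_2$, each resulting cubic or higher expression can be bounded either by $(\|u^{\mu}\|_{H^s}+\|u^{\mu'}\|_{H^s})^k\|v\|_{L^2}^2$ for some $1\le k\le l-1$, or, for those pieces that are genuinely of size $(\mu-\mu')$ and contain two high-order copies of $u^{\mu'}$, by $(\mu-\mu')|\int\partial_x^{l+1}u^{\mu'}\partial_x^{l+1}v|$ after one Cauchy--Schwarz.

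The main obstacle I anticipate is the bookkeeping at levels $k=3,\dots,l+1$: each substitution generates three dissipative contributions to $\frac{d}{dt}\widetilde{T}_k^0$ that must be shown to be either absorbable into the positive $\mu\|\partial_x^{l+1}v\|_{L^2}^2$ on the left, into the main cubic-through-$(l+1)$st-order right-hand side, or into the single $(\mu-\mu')$-source term already present; in particular, no contribution from $(\mu-\mu')\partial_x^{2l+2}u^{\mu'}$ at the higher-order corrections should generate a term strictly worse than the advertised $(\mu-\mu')\int\partial_x^{l+1}u^{\mu'}\partial_x^{l+1}v$. Thanks to the regularity $s>4l-\frac{9}{2}$ already exploited in the dispersive case and to the smallness of $\|u^{\mu'}\|_{H^s}$, the integration-by-parts moves of Propositions \ref{diffEE} and \ref{apriorienergy} go through with only cosmetic modifications, and the iterative construction terminates after the same $l-1$ steps.
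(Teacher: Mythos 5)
Your proposal is correct and follows essentially the same route as the paper: the paper's proof likewise reduces to Proposition \ref{diffEE} with $\sigma=0$, identifies the good term $-\mu\|\partial_x^{l+1}v\|_{L^2}^2$ and the source term $(\mu-\mu')\int\partial_x^{l+1}u^{\mu'}\partial_x^{l+1}v$ as the quadratic-level contributions of the two new terms in \eqref{dissipdiff}, and disposes of the dissipative contributions to the higher-order corrections by integration by parts and Sobolev embedding at the $L^2$ level (as in Proposition \ref{apriorienergy}), exactly as you describe. Your treatment is in fact more detailed than the paper's three-sentence argument; the only cosmetic point is that the low--high cross term is most cleanly bounded by moving all $2l+2$ derivatives onto $v_{low}$ (or by noting its Fourier support lies in $1\le|\xi|\le 2$) rather than by Cauchy--Schwarz as written.
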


\begin{proof} The proof of Proposition \ref{diffapriorienergy} follows the lines of the proof of Proposition \ref{diffEE} for the dissipationless equation. 
	
Note that this time, we can control the terms resulting from the dissipation when deriving the higher order terms in the modified energy just by using the Sobolev embedding, since we are at the $L^2$-level and $u^{\mu}$ is bounded in $H^s$ for $s>4l-\frac92$.

Moreover, the last term appearing on the right-hand side of \eqref{diffapriorienergy.2} corresponds to the contribution of the last term on the left-hand side of \eqref{dissipdiff}.
\end{proof}
 
According to \eqref{aprioriestimate.1}, there exists a small positive number $0<\delta_3\le \delta_1$, such that if $\|u_0\|_{H^s} \le \delta < \delta_3$, then $\|u^{\mu}\|_{L^{\infty}_1H^s_x} < \delta_2$. Thus, it follows from \eqref{diffapriorienergy.1}, \eqref{diffapriorienergy.2} and the \textit{a priori} estimate \eqref{aprioriestimate.1} that 
\begin{displaymath}
\frac{d}{dt}\widetilde{E}^0(v)(t) \le C \sum_{k=1}^{l-1}\delta^k \widetilde{E}^0(v)(t)+C(\mu-\mu')\delta^2 \, .
\end{displaymath}
Hence, we deduce by using Gronwall's inequality and \eqref{diffapriorienergy.1} that
\begin{displaymath}
\|u^{\mu}-u^{\mu'}\|_{L^{\infty}_1L^2_x} \lesssim \mu-\mu' \underset{\mu',\mu \to 0}{\longrightarrow} 0 \, ,
\end{displaymath}
which yields interpolating with \eqref{aprioriestimate.1} 
\begin{equation} \label{existence.1}
\|u^{\mu}-u^{\mu'}\|_{L^{\infty}_1H^{s_-}}\underset{\mu',\mu \to 0}{\longrightarrow} 0 \, ,
\end{equation}
for any number $s_-$ slightly smaller than $s$.

\smallskip

Therefore, there exists $u \in C([0,1] : H^{s_-}(M))$ such that $\{u^{\mu}\}$ converges to $u$ in $L^{\infty}([0,1] : H^{s_-}(M))$ as $\mu \to 0$. Passing to the limit in \eqref{dissiphomodel} as $\mu \to 0$ , it is easy to verify that $u$ is a solution to \eqref{homodel} in the distributional sense.

\subsection{Uniqueness} \label{Un} Let $u_1$ and $u_2$ be two solutions of \eqref{homodel} in $C([0,T] : H^s(M))$ corresponding to the same initial datum $u_1(\cdot,0)=u_2(\cdot,0)=u_0 \in H^s(M)$. As explained above, it is sufficient to assume that $\|u_0\|_{H^s} \le \delta$, where $\delta$ is a small positive number, and that $u_1$, $u_2$ are defined on a time interval $[0,1]$. 
\quad

Arguing as in the proof of Lemma \ref{aprioriestimate} (using Proposition \ref{EE} instead of Proposition \ref{diffapriorienergy}), we deduce the \textit{a priori} estimate 
\begin{displaymath} 
\|u_1\|_{L^{\infty}_1H^s_x}+\|u_2\|_{L^{\infty}_1H^s_x} \lesssim \delta \, .
\end{displaymath}
Let $v=u_1-u_2$. We conclude that $v \equiv 0$ on $[0,1]$ by integrating \eqref{diffEE.4} and using \eqref{diffEE.3}, for  $\delta$ chosen small enough.
 
\subsection{Persistence property and continuity of the flow map} \label{PE}
In this subsection, we will use the Bona-Smith argument \cite{BoSm} in order to prove the persistence property, that is $u \in C([0,T] : H^s(\mathbb R))$, and the continuity of the flow map. 

\smallskip
Let $u_0 \in H^s(M)$. By using a scaling argument, we can always assume that $\|u_0\|_{H^s} < \delta$, where $\delta$ is a small positive number. From the existence part, the corresponding solution $u$ to \eqref{homodel} is defined on $[0,1]$ and belongs to $C([0,1] : H^{s_-}(M))$, where $s_-$ denotes any number slightly smaller than $s$.

\smallskip
Now, we regularize the initial datum $u_0$ and consider the  corresponding IVP 
\begin{equation} \label{reghomodel}
\left\{
\begin{array}{l}\partial_tu^{\epsilon}+\partial_x^{2l+1}u^{\epsilon}=u^{\epsilon}\partial_x^{2l-1}u^{\epsilon} \\ u^{\epsilon}(\cdot,0)=u_0^{\epsilon}=u_0\ast \rho_{\epsilon} \in H^{\infty}(\mathbb R) \, ,
\end{array} \right. 
\end{equation}
where $\rho_{\epsilon}$ is an approximation of the identity. 

More precisely, let $\rho \in \mathcal{S}(\mathbb R)$ if $M=\mathbb R$, respectively $\rho \in C^{\infty}_{per}$ if $M=\mathbb T$, be such that $\int\rho(x) \, dx=1$ and $\int x^k\rho(x) \, dx=0,$ for $k \in \mathbb Z_{+}$ with $0 \le k \le [s]+1$. For any $\epsilon>0$, define $\rho_{\epsilon}(x)=\epsilon^{-1}\rho(\epsilon^{-1}x)$. The following lemma, whose proof can be found in \cite{BoSm} (see also Proposition 2.1 in \cite{KaPo2}), gathers the properties of the smoothing operators
\begin{lemma} \label{BonaSmith}
Let $s \ge 0$, $\phi \in H^s(M)$ and for any $\epsilon>0$,
$\phi_{\epsilon}=\rho_{\epsilon} \ast \phi$. Then,
\begin{equation} \label{BonaSmith.1}
\|\phi_{\epsilon}\|_{H^{s+\nu}} \lesssim \epsilon^{-\nu}
\|\phi\|_{H^s}, \quad \forall \nu \ge0,
\end{equation}
and
\begin{equation} \label{BonaSmith.2}
\|\phi-\phi_{\epsilon}\|_{H^{s-\beta}} \underset{\epsilon
\rightarrow 0}{=} o(\epsilon^{\beta}), \quad \forall \beta \in
[0,s].
\end{equation}
\end{lemma}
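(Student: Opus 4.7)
The plan is to work on the Fourier side. In the continuous case $\widehat{\phi_\epsilon}(\xi) = \widehat{\rho}(\epsilon \xi)\widehat{\phi}(\xi)$, and the moment hypotheses on $\rho$ translate, via $\partial_\xi^k\widehat{\rho}(0) = (-i)^k\int x^k \rho(x)\,dx$, into $\widehat{\rho}(0) = 1$ together with $\partial_\xi^k\widehat{\rho}(0) = 0$ for $1 \le k \le [s]+1$. Combined with $\widehat{\rho}\in\mathcal{S}$, Taylor's formula yields the two inputs
$$ |\widehat{\rho}(\eta)| \lesssim (1+|\eta|)^{-N} \quad (\forall\, N \ge 0), \qquad |1-\widehat{\rho}(\eta)| \lesssim \min\bigl(|\eta|^{[s]+2},\, 1\bigr). $$

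For \eqref{BonaSmith.1}, I would apply the first estimate with $N=\nu$ to obtain $(1+|\xi|)^\nu|\widehat{\rho}(\epsilon\xi)| \lesssim (1+|\xi|)^\nu(1+\epsilon|\xi|)^{-\nu} \lesssim \epsilon^{-\nu}$ uniformly in $\xi$ (for $\epsilon \le 1$), and then plug this into $\|\phi_\epsilon\|_{H^{s+\nu}}^2 = \int(1+|\xi|^2)^{s+\nu}|\widehat{\rho}(\epsilon\xi)|^2|\widehat{\phi}(\xi)|^2 d\xi$ and factor out $\epsilon^{-2\nu}$.

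For \eqref{BonaSmith.2}, the natural strategy is dominated convergence after extracting the correct power of $\epsilon$:
$$ \epsilon^{-2\beta}\|\phi - \phi_\epsilon\|_{H^{s-\beta}}^2 = \int (1+|\xi|^2)^{s-\beta}\, \epsilon^{-2\beta}|1-\widehat{\rho}(\epsilon\xi)|^2|\widehat{\phi}(\xi)|^2\, d\xi. $$
The key uniform bound is $\epsilon^{-2\beta}|1-\widehat{\rho}(\epsilon\xi)|^2 \lesssim |\xi|^{2\beta}$ for every $\xi$ and every $\beta \in [0,s]$. One splits into the regime $\epsilon|\xi|\le 1$, where the Taylor estimate together with $\beta \le s \le [s]+1 < [s]+2$ gives
$$\epsilon^{-2\beta}(\epsilon|\xi|)^{2([s]+2)} = (\epsilon|\xi|)^{2([s]+2-\beta)}|\xi|^{2\beta} \le |\xi|^{2\beta},$$
and the regime $\epsilon|\xi| > 1$, where $\epsilon^{-2\beta} \le |\xi|^{2\beta}$. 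This supplies the integrable majorant $(1+|\xi|^2)^{s-\beta}|\xi|^{2\beta}|\widehat{\phi}(\xi)|^2 \lesssim (1+|\xi|^2)^s|\widehat{\phi}(\xi)|^2 \in L^1$. Pointwise in $\xi$, the Taylor estimate gives $\epsilon^{-2\beta}|1-\widehat{\rho}(\epsilon\xi)|^2 \lesssim \epsilon^{2([s]+2-\beta)}|\xi|^{2[s]+4} \to 0$ as $\epsilon\to 0$, so dominated convergence yields \eqref{BonaSmith.2}.

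The periodic case is handled identically, replacing the integral over $\mathbb R$ by a sum over $\mathbb Z$ and $\widehat{\rho}$ by the Fourier coefficients of the smoothing kernel; the same Taylor/decay analysis applies. The main obstacle is the careful bookkeeping in the second inequality, where the condition $\beta \le s$ interacts with the number of vanishing moments $[s]+1$ in precisely the right way to produce the required majorant together with pointwise decay—this is where the choice of moment order is essentially sharp.
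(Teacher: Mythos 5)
Your argument is correct. The paper does not actually prove this lemma---it simply defers to \cite{BoSm} and to Proposition 2.1 in \cite{KaPo2}---so there is nothing internal to compare against; what you have written is precisely the standard Fourier-multiplier proof found in those references: the moment conditions give $|1-\widehat{\rho}(\eta)| \lesssim \min\bigl(|\eta|^{[s]+2},1\bigr)$, the rapid decay of $\widehat{\rho}$ yields \eqref{BonaSmith.1} (for $0<\epsilon\le 1$, which is the only relevant range), and the splitting $\epsilon|\xi|\le 1$ versus $\epsilon|\xi|>1$ combined with dominated convergence yields \eqref{BonaSmith.2}, with the inequality $\beta\le s<[s]+2$ entering exactly where you say it does. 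The only point worth flagging is the periodic case: rescaling a $2\pi$-periodic $\rho$ by $\rho_{\epsilon}(x)=\epsilon^{-1}\rho(\epsilon^{-1}x)$ does not produce a $2\pi$-periodic kernel, so one should either take $\rho_{\epsilon}$ to be the periodization of $\epsilon^{-1}\rho(\epsilon^{-1}\cdot)$ for a Schwartz function $\rho$ (whose Fourier coefficients are then exactly $\widehat{\rho}(\epsilon n)$, so your computation transfers verbatim to the sum over $\mathbb Z$) or work directly with the multiplier $n\mapsto\widehat{\rho}(\epsilon n)$; this is an imprecision in the paper's setup rather than a gap in your proof.
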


\smallskip 
From the existence part, there exists a solution $u^{\epsilon}$ of \eqref{reghomodel} in $C([0,1] : H^{\infty}(M))$, for all $0<\epsilon \le 1$. Moreover, $\|u_0^{\epsilon}\|_{H^s} \le \|u_0\|_{H^s} \le \delta$. Thus, we deduce arguing as in the proof of Lemma \ref{aprioriestimate}, using Proposition \ref{EE} instead of Proposition \ref{apriorienergy}, that 
\begin{equation} \label{persistence.1}
\|u^{\epsilon}\|_{L^{\infty}_1H^s_x} \lesssim \|u_0^{\epsilon}\|_{H^s} \lesssim \delta \, .
\end{equation}

Now, let $0<\epsilon'<\epsilon \le 1$. By applying estimates \eqref{diffEE.3}-\eqref{diffEE.5} of Proposition \ref{diffEE} for $u_1=u^{\epsilon}$ and $u_2=u^{\epsilon'}$ and using \eqref{BonaSmith.1}, \eqref{BonaSmith.2} and \eqref{persistence.1} we get that
\begin{equation} \label{persistence.2}
\|u^{\epsilon}-u^{\epsilon'}\|_{L^{\infty}_1L^2_x} \lesssim \|u_0^{\epsilon}-u_0^{\epsilon'}\|_{L^2} 
\underset{\epsilon \to 0}{=}o(\epsilon^s)
\end{equation}
and
\begin{equation} \label{persistence.3}
\begin{split}
\|u^{\epsilon}-u^{\epsilon'}\|_{L^{\infty}_1H^s_x}^2 &\lesssim \|u_0^{\epsilon}-u_0^{\epsilon'}\|_{H^s}^2
\\&+ \left(\sum_{k=1}^{l-1}\|u^{\epsilon}(t)\|_{L^{\infty}_1H^{s+2l-k}_x}\right)\|u^{\epsilon}-u^{\epsilon'}\|_{L^{\infty}_{1,x}}\|u^{\epsilon}-u^{\epsilon'}\|_{L^{\infty}_1H^s_x}
\end{split}
\end{equation}
as soon as $\delta$ is chosen small enough. Then, we conclude combining the Gagliardo-Nirenberg inequality
\begin{displaymath} 
\|v\|_{L^{\infty}} \lesssim  \|v\|_{L^2}^{1-\frac1{2s}}\|v\|_{H^s}^{\frac1{2s}} \, ,
\end{displaymath}
with \eqref{persistence.1}-\eqref{persistence.3} that 
\begin{equation} \label{persistence.4}
\|u^{\epsilon}-u^{\epsilon'}\|_{L^{\infty}_1H^s_x}^2 \lesssim \|u_0^{\epsilon}-u_0^{\epsilon'}\|_{H^s}^2+\left(\sum_{k=1}^{l-1}\epsilon^{-(2l-k)}\right)o(\epsilon^{s-\frac12}) \underset{\epsilon \to 0}{\longrightarrow} 0 \, ,
\end{equation}
since $s>4l-\frac92$ and $l \ge 2$. 

Therefore, $\{u^{\epsilon}\}$ is a Cauchy sequence in $L^{\infty}([0,1] : H^s(M))$, so that there exists $\widetilde{u} \in C([0,1] : H^1(M))$ of \eqref{homodel} such that 
\begin{equation} \label{persistence.5}
u^{\epsilon} \underset{\epsilon \to 0}{\longrightarrow} \widetilde{u} \quad \text{in} \quad L^{\infty}_1H^s_x \, .
\end{equation} 
By using the uniqueness result, we conclude that $u \equiv \widetilde{u} \in C([0,1] : H^s(M))$.

\medskip 

Now, we turn to the proof of the continuous dependence. Let $\theta>0$ be given. It suffices to prove that there exists $\kappa>0$ such that for any $v_0 \in H^s(M)$ satisfying  $\|u_0-v_0\|_{H^s} < \kappa$, the solution $v \in C([0,1] : H^s(M))$  of \eqref{homodel} emanating from $v_0$ satisfies 
\begin{equation} \label{contdep.1}
\|u-v\|_{L^{\infty}_1H^s_x} < \theta \, .
\end{equation}

For any $\epsilon>0$, we regularize the initial datum $v_0$ by defining $v_0^{\epsilon}=v_0 \ast \rho_{\epsilon}$ as above. Then, it follows from the triangle inequality that 
\begin{equation} \label{contdep.2}
\|u-v\|_{L^{\infty}_1H^s_x} \le \|u-u^{\epsilon}\|_{L^{\infty}_1H^s_x}+
\|u^{\epsilon}-v^{\epsilon}\|_{L^{\infty}_1H^s_x}
+\|v-v^{\epsilon}\|_{L^{\infty}_1H^s_x} \, .
\end{equation}
According to \eqref{persistence.5}, we can find $\epsilon_0>0$ small enough such that 
\begin{equation} \label{contdep.3}
\|u-u^{\epsilon_0}\|_{L^{\infty}_1H^s_x}+\|v-v^{\epsilon_0}\|_{L^{\infty}_1H^s_x}<\theta/3 \, .
\end{equation}

In order to estimate the second term on the right-hand side of \eqref{contdep.2}, we consider the parabolic regularizations $u^{\epsilon_0,\mu}$ and $v^{\epsilon_0,\mu}$ of $u^{\epsilon_0}$ and $v^{\epsilon_0}$ for $0<\mu\le 1$, \textit{i.e.} $u^{\epsilon_0,\mu}$, respectively $v^{\epsilon_0,\mu}$, is a solution to the equation in \eqref{dissiphomodel} with initial datum $u_0^{\epsilon_0}$, respectively $v_0^{\epsilon_0}$. 

According to \eqref{BonaSmith.1}, $u_0^{\epsilon_0}$ and $v_0^{\epsilon_0}$ belong to $H^{s+1}(M)$ and satisfy 
\begin{displaymath} 
\|u_0^{\epsilon_0}\|_{H^{s+1}} \lesssim  \epsilon_0^{-1}\|u_0\|_{H^s} \lesssim \epsilon_0^{-1}\delta
\end{displaymath}
and 
\begin{displaymath} 
\|v_0^{\epsilon_0}\|_{H^{s+1}} \le  \|u_0^{\epsilon_0}\|_{H^{s+1}}
+\|u^{\epsilon_0}_0-v_0^{\epsilon_0}\|_{H^{s+1}}
\lesssim \epsilon_0^{-1}\delta+\epsilon_0^{-1}\kappa \, .
\end{displaymath}
Thus, by choosing $\delta=\delta(\epsilon_0)$ and $\kappa=\kappa(\epsilon_0)$ small enough, we deduce from the theory in Subsections \ref{APE}, \ref{EX} and \ref{Un} with $\sigma=s+1$ and $\sigma_-=s$, that $\{u^{\epsilon_0,\mu}\}_{\mu}$ and $\{v^{\epsilon_0,\mu}\}_{\mu}$ converge to $u^{\epsilon_0}$ and $v^{\epsilon_0}$  in $L^{\infty}([0,1] : H^s(M))$, as $\mu$ tends to $0$. Then, there exists $\mu_0>0$, small enough such that 
\begin{align} \label{contdep.4}
\|u^{\epsilon_0}-v^{\epsilon_0}\|_{L^{\infty}_1H^s_x} &\le \|u^{\epsilon_0}-u^{\epsilon_0,\mu_0}\|_{L^{\infty}_1H^s_x}+
\|u^{\epsilon_0,\mu_0}-v^{\epsilon_0,\mu_0}\|_{L^{\infty}_1H^s_x}
+\|v^{\epsilon_0}-v^{\epsilon_0,\mu_0}\|_{L^{\infty}_1H^s_x} \nonumber\\ &
\le \theta/3+\|u^{\epsilon_0,\mu_0}-v^{\epsilon_0,\mu_0}\|_{L^{\infty}_1H^s_x} \, .
\end{align}

Finally, observe from \eqref{BonaSmith.1} that $\|u_0^{\epsilon_0}-v_0^{\epsilon_0}\|_{H^s} < \kappa$. Then, we deduce from the continuous dependence of the regularized problem \eqref{dissiphomodel} in Proposition \ref{parabolic}, that 
\begin{equation} \label{contdep.5}
\|u^{\epsilon_0,\mu_0}-v^{\epsilon_0,\mu_0}\|_{L^{\infty}_1H^s_x} <\theta/3 \, ,
\end{equation}
if $\kappa=\kappa(\epsilon_0,\mu_0)$ is chosen small enough. 

Therefore \eqref{contdep.1} follows gathering \eqref{contdep.2}-\eqref{contdep.4}, which concludes the proof of the continuous dependence.

\vspace{0,5cm}

\noindent \textbf{Acknowledgments.} 
Part of this research was carried out when the second author was visiting the Department of Mathematics of the University of Chicago, whose hospitality is gratefully acknowledged. The authors would like to thank Gustavo Ponce for fruitful discussions about this work.

\end{document}